\documentclass[11pt]{amsart}
\usepackage[margin=1in]{geometry}
\usepackage{changepage, bm, amsmath, makecell, multirow, cite,
  setspace,amsfonts, graphicx, color}
\usepackage{mathtools}
\usepackage{amsthm}
\usepackage{verbatim}
\usepackage{algorithm}
\usepackage{algorithmic}
\numberwithin{equation}{section}
\usepackage[toc,page]{appendix}
\usepackage{caption}
\newtheorem{theorem}{Theorem}[section]
\newtheorem{remark}{Remark}[section]

\newtheorem{lemma}[theorem]{Lemma}
\newtheorem{definition}{Definition}[section]
\newtheorem{proposition}{Proposition}[section]

\begin{document}
\title{Sensitivity analysis of quasi-stationary-distributions (QSDs)}
\author{Yao Li}
\address{Yao Li: Department of Mathematics and Statistics, University
  of Massachusetts Amherst, Amherst, MA, 01002, USA}
\email{yaoli@math.umass.edu}
\author{Yaping Yuan}
\address{Yaping Yuan: Department of Mathematics and Statistics, University
  of Massachusetts Amherst, Amherst, MA, 01002, USA}
\email{yuan@math.umass.edu}

\thanks{Authors are listed in alphabetical order. Yao Li is partially supported by NSF DMS-1813246 and DMS-2108628.}

\keywords{quasi-stationary distribution, law of mass action, sensitivity analysis, Monte Carlo simulation}

\begin{abstract}
  This paper studies the sensitivity analysis of mass-action systems against their diffusion approximations, particularly the dependence on population sizes. As a continuous time Markov chain, a mass-action system can be described by a equation driven by finite many Poisson processes, which has a diffusion approximation that can be pathwisely matched. The magnitude of noise in mass-action systems is proportional to the square root of the molecule count/population, which makes a large class of mass-action systems have quasi-stationary distributions (QSDs) instead of invariant probability measures. In this paper we modify the coupling based technique developed in \cite{dobson2021using} to estimate an upper bound of the 1-Wasserstein distance between two QSDs. Some numerical results for sensitivity with different population sizes are provided.
\end{abstract}
\maketitle
\section{Introduction}
A mass-action network is a system of finite many species and reactions whose rule of update satisfies the mass-action law. Mass-action network covers a large number of chemical reaction network, epidemiology models, and population models. At the molecule level, reactions in the mass-action network are random events that modify the state of the network according to the stoichiometric equations. The time of these random events satisfy mass-action laws. Therefore, a mass-action network can be mathematically described by a continuous-time Markov process, which is driven by finite many Poisson processes. 

The randomness in updating the network is called the demographic noise in population and epidemiology models. It is well known that demographic noise leads to finite time extinction in a very large class of population models (see for example the discussion in Section 3.1). This is because the magnitude of the demographic noise is proportional to the population size. As a result, when the population is small, in many mass-action systems, the noise could become the dominate term and leads to finite time extinction with strictly positive probability. Therefore, the asymptotic property of the mass-action network with finite time extinction is usually described by the quasi-stationary distribution (QSD), which is the conditional limiting distribution conditioning on not hitting the absorbing set yet. As discussed in \cite{li2021data}, when the extinction rate is low, the quasi-stationary distribution can be well approximated by the invariant probability measure of a modified process that artificially "pushes" the trajectory away from the extinction. 

It has been known for decades that when the population size is large, the continuous-time Markov process converges to the mass-action ordinary differential equations (ODEs). In addition, by setting up a martingale problem, one can show that the re-scaled difference between the continuous-time Markov process and the mass-action ODE converges to a stochastic differential equation. Therefore, at any finite time, the continuous-time Markov process of a mass-action network is approximated by a stochastic differential equation. This is called the diffusion approximation of a mass-action network. We refer \cite{anderson2011continuous,gibson2000efficient} for further details.

The goal of this paper is to study the sensitivity of QSDs against the diffusion approximation. We are interested in how QSDs of the Markov process and its diffusion approximation differs from each other. The motivation is that an exact simulation at the molecule level is usually computationally expensive even if the stochastic simulation algorithm (SSA) is implemented optimally  \cite{gillespie2007stochastic,li2015fast,slepoy2008constant}. It is even harder to numerically compute the QSD when the number of molecule is large. On the other hand, the simulation of a diffusion process is much easier. The technique of computing the invariant probability measure or QSD of a stochastic differential equation is also well developed \cite{li2020numerical,li2021data,zhai2022deep}. Hence it is important to have a quantitative upper bound of the difference between the QSD of a mass-action system and that of its diffusion approximation.

The way of sensitivity analysis is developed from on the coupling-based method in \cite{dobson2021using}. We need both finite time truncation error and the rate of contraction of the transition kernel of the diffusion process. The finite time error is given by the KMT algorithm in \cite{mozgunov2018review}. With the explicit construction of coupled trajectories of the Poisson process and the diffusion process, the finite time error up to fixed time $T$ can be computed. The rate of contraction is modified from the data-driven method proposed in \cite{li2020numerical}. We design a suitable coupling scheme for the modified diffusion process that regenerates from the QSD right after hitting the absorbing set. Because of the coupling inequality, the exponential tail of the coupling time can be used to estimate the rate of contraction. The sensitivity analysis is demonstrated by several numerical examples. Generally speaking, the distance between two processes is much larger for smaller volume (i.e., molecule count).

The organization of this paper is as follows. A short preliminary about reaction networks, rates for the law of mass action, Poisson process, diffusion process and coupling times is provided in section 2. Section 3 introduces the algorithms for computing the finite time error and the rate of contraction in two different cases. All numerical examples are demonstrated in section 4. Section 5 is the conclusion. All explicit expressions of Poisson process and Wiener process are shown in the appendix.

\section{Preliminary}
\subsection{Stochastic mass reaction networks}

We consider a mass action network of $K$ reactions involving $d$
distinct species, $S_1, \cdots, S_d$,
\begin{equation}
    \label{stoich}
    \sum^d_{i=1}c_{ki}S_i \rightarrow \sum^d_{i=1}c'_{ki}S_i, \ k = 1, \cdots, K
\end{equation}
where $c_{ki}$ and $c'_{ki}$ are non-negative integers that denote the
number of molecules of species $S_{i}$ consumed and produced by
reaction $k$, respectively. Let $V$ be the volume of the reaction
system. Let $X(t) = (x_{1}(t) , \cdots, x_{d}(t)) \in
\mathbb{R}^{d}$ be the state of the mass action system at time $t$,
such that the $i$-th entry of $X(t)$ represents the concentration of
species $S_{i}$, $i = 1, \cdots, d$. In other words the number of
molecules of $S_{i}$ is $V x_{i} := N_{i}$. Let $\lambda_k$ be the rate
at which the $k$th reaction occurs, that is, it gives the propensity
of the $k$-th reaction as a function of the concentrations of molecules of the chemical species.

\subsection{Rates for the law of mass action} The law of
mass action means the rate of a reaction should be proportional to the
number of distinct subsets of the participating molecules. More
precisely, the rate of reaction $k$ reads
\begin{equation*}
    \lambda_k = \kappa_k V \prod^d_{i=1}(\frac{N_i}{V})^{c_{ki}}:= Vf_k(X),
\end{equation*}
where $\kappa_k$ is a rate constant, and $N_i$ be the number of
molecule of the $i$th species in the system. Let $\Delta t \ll 1$ be a
very short time period. More precisely, given all information of the system up to time
$t$, we have
$$
  \mathbb{P}[ \mbox{ reaction } k\  \mbox{occurs in } [t, t + \Delta t)
  ] = \lambda_k \Delta t + O(\Delta t^{2}) \,.
$$

\subsection{Poisson process}
We use Poisson counting process to represent
$X(t)$, because $X(t)$ is a continuous time discrete
state Markov chain. Let $X_{i}(t)$ be $i$-th entry of $X(t)$, then 
\begin{equation*}
    X_i(t) = X_i(0)+\frac{1}{V}\sum_{k} R_{k}(t)(c'_{ki} - c_{ki}),
\end{equation*}
where $R_{k}(t)$ is the number of times the reaction $k$ has occurred
by time $t$ and $R_{k}(0)=0$. Because the number of
molecules of species changes with time, $R_{k}(t)$ is an inhomogeneous
Poisson process that is  given by 
\begin{equation}
    R_k(t) = P_k(V\int^t_0 f_k(X(s))ds),
\end{equation}
where $P_{k}(\cdot)$ is a unit-rate Poisson point process. It is well
known that $P_{k}(\cdot)$ satisfies the following three properties: (1)
$P_{k}(0) = 0$, (2) $P_{k}(\cdot)$ has independent increments, and (3)
$P_{k}(s+t) - P_{k}(s)$ is a Poisson random variable with parameter
$t$. And the whole system is given by
\begin{equation}
\label{poss}
    X(t) = X(0) + \sum_{k} \frac{l_k}{V} P_k(V\int^t_0 f_k(X(s))ds)
\end{equation}
where $P_k(t), \ k=\{1,\cdots, K\}$ are independent unit-rate Poisson
processes, and $l_{k} = c'_{k} - c_{k}\in \mathbb{R}^{d}$ denotes the
coefficient change of molecules at reaction $k$.

\subsection{Diffusion process} When $V$ is large, a
Poisson process can be approximated by a diffusion process. The follow
lemma in \cite{komlos1975approximation, komlos1976approximation} gives
the strong approximation theorem for Poisson processes. 

\begin{lemma}
\label{lem21}
A unit Poisson process $P(\cdot)$ and a Wiener process $B(\cdot)$ can be constructed so that
\begin{equation*}
    \left|\frac{P(Vt)-Vt}{\sqrt{V}}-\frac{1}{\sqrt{V}}B(Vt)\right|\leq \frac{\log(Vt\vee 2)}{\sqrt{V}}\Gamma,
\end{equation*}
where $\Gamma$ is a random variable such that
$\mathbb{E}(e^{c\Gamma})<\infty$ for some constant $c > 0$.
\end{lemma}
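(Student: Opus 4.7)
The plan is to build the coupling by applying the abstract Komlós--Major--Tusnády (KMT) strong embedding theorem to the i.i.d.\ increments of the Poisson process and then extending the resulting bound to continuous time. Recall that the KMT theorem asserts: if $(\xi_i)_{i\ge 1}$ are i.i.d.\ with $\mathbb{E}[\xi_1]=0$, $\mathrm{Var}(\xi_1)=1$ and $\mathbb{E}[e^{c_0|\xi_1|}]<\infty$ for some $c_0>0$, then on a sufficiently rich probability space one can construct a standard Wiener process $B(\cdot)$ together with the partial sums $S_n=\sum_{i=1}^n\xi_i$ such that
\[
\sup_{n\ge 1}\,\frac{|S_n-B(n)|}{\log(n\vee 2)}\le \Gamma_0,
\]
for some random variable $\Gamma_0$ satisfying $\mathbb{E}[e^{c\Gamma_0}]<\infty$ for some $c>0$.

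First, I would apply the KMT theorem to the centered unit-rate Poisson increments $\xi_i := P(i)-P(i-1)-1$, which are i.i.d.\ with mean $0$, variance $1$, and enjoy an exponential moment since a $\mathrm{Poisson}(1)$ variable does. This yields, on a common probability space, a Wiener process $B(\cdot)$ and a deterministic constant $C$ such that $|P(n)-n-B(n)|\le C\log(n\vee 2)\,\Gamma_0$ for every integer $n\ge 1$.

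Second, I would promote the bound from the integer grid to continuous time. For $s\in[n,n+1]$ a straightforward triangle inequality gives
\[
|P(s)-s-B(s)|\le |P(n)-n-B(n)|+(P(n+1)-P(n))+\sup_{u\in[n,n+1]}|B(u)-B(n)|+1.
\]
The oscillation $M_n := \sup_{u\in[n,n+1]}|B(u)-B(n)|$ has Gaussian tails and $J_n := P(n+1)-P(n)$ has Poisson tails, so a union bound over $n$ combined with Borel--Cantelli shows that $\sup_{n\ge 1}(J_n+M_n)/\log(n\vee 2)$ is dominated by a random variable $\Gamma_1$ with $\mathbb{E}[e^{c'\Gamma_1}]<\infty$. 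Absorbing all constants produces a single $\Gamma$ with an exponential moment such that $|P(s)-s-B(s)|\le \log(s\vee 2)\,\Gamma$ uniformly in $s\ge 0$. Substituting $s=Vt$ and dividing both sides by $\sqrt{V}$ yields precisely the stated inequality; the $\vee 2$ inside the logarithm handles the small-$Vt$ regime, where the left-hand side is already $L^p$-bounded by a constant that can be absorbed into $\Gamma$.

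The main obstacle is the KMT theorem itself, a deep construction that proceeds via dyadic conditioning of partial sums, a quantile coupling to Gaussian variables, and Sakhanenko-type exponential moment estimates on conditional distributions. I would simply invoke it from \cite{komlos1975approximation, komlos1976approximation}. Everything afterwards --- specialization to Poisson increments, continuous-time interpolation via the jump/oscillation bound, and the $V$-rescaling --- is routine tail-bound accounting once that input is available.
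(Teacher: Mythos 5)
The paper offers no proof of this lemma at all---it is quoted verbatim from the cited Koml\'os--Major--Tusn\'ady papers---so there is no argument of the authors' to diverge from, and your reconstruction is the standard and correct route to this corollary: apply the i.i.d.\ KMT embedding to the centered unit Poisson increments $\xi_i = P(i)-P(i-1)-1$, pass from the integer grid to continuous time by controlling the one-step jump $P(n+1)-P(n)$ and the Brownian oscillation $\sup_{u\in[n,n+1]}|B(u)-B(n)|$ with a union bound that yields exponential moments for $\sup_n (J_n+M_n)/\log(n\vee 2)$, and then substitute $s=Vt$ and divide by $\sqrt{V}$. The only ingredient you do not reprove is the dyadic KMT construction itself, which both you and the paper rightly import from the references; your surrounding tail-bound accounting is sound.
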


\begin{remark}
By the scaling property of Wiener process, $\frac{1}{\sqrt{V}}B(Vt)$
is also a standard Wiener process.
\end{remark}

With the lemma above and Ito's formula, we have the diffusion approximation
\begin{equation*}
\begin{aligned}
P_k\left(V\int^t_0 f_k(X(s))ds\right) &\approx V\int^t_0 f_k(X(s))ds  + \int_{0}^{t}\sqrt{V f_k(X(s))}dB(s)\\
&= V\int^t_0 f_k(X(s))ds + B_k\left(V\int^t_0 f_k(X(s))ds\right)
\end{aligned} 
\end{equation*}
This gives the diffusion approximation of the mass action system
$X(t)$:
\begin{equation*}
    Y(t) = Y(0)+\sum_k \frac{l_k}{V}\left[V \int^t_0 f_k(Y(s))ds + B_k\left(V\int^t_0 f_k(X(s))ds\right)\right].
\end{equation*}
In the chemistry literature, $Y$ is known as the Langevin
approximation for the continuous time Markov chain
model. Theoretically, the distance between these two approximations is
bounded as follow theorem in \cite{mozgunov2018review}.  
\begin{theorem}
\label{distance}
Let $X(t)$ be a Poisson process represented by \eqref{poss}, let $Y(t)$ be a diffusion process with initial condition satisfying $X(0)=Y(0)$ and solves the following stochastic differential equation
\begin{equation}
    \label{diff}
    Y(t) = Y(0)+\sum_k \frac{l_k}{V}\left[V \int^t_0 f_k(Y(s))ds + B_k\left(V \int^t_0 f_k(Y(s))ds\right)\right]
\end{equation}
where the $B_k(\cdot)$ are independent standard Wiener processes. As $V\rightarrow\infty$,
\begin{equation}
    \sup|X(t)-Y(t)| = O\left(\frac{\log V}{V}\right).
\end{equation}
\end{theorem}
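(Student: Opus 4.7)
\bigskip
\noindent\textbf{Proof proposal.} The plan is to couple each Poisson process $P_k$ to a Brownian motion $B_k$ through the KMT strong approximation (Lemma \ref{lem21}), write the difference $X(t)-Y(t)$ as a sum of three controllable pieces, and close the estimate with a Gronwall-type argument on a localization set where $X$ and $Y$ stay bounded (so that the rates $f_k$ are Lipschitz with a uniform constant).

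To begin, for each $k=1,\dots,K$, I would apply Lemma \ref{lem21} to obtain an independent copy of $P_k$ and $B_k$ satisfying
\begin{equation*}
\bigl|P_k(Vs)-Vs-B_k(Vs)\bigr|\;\leq\;\log(Vs\vee 2)\,\Gamma_k,\qquad s\geq 0,
\end{equation*}
with $\mathbb{E}[e^{c\Gamma_k}]<\infty$. Setting $\tau_k^X(t)=\int_0^t f_k(X(s))\,ds$ and $\tau_k^Y(t)=\int_0^t f_k(Y(s))\,ds$, subtracting \eqref{diff} from \eqref{poss} and adding/subtracting $V\tau_k^X(t)$ and $B_k(V\tau_k^X(t))$ inside the sum gives the decomposition
\begin{equation*}
X(t)-Y(t)=\sum_k l_k\bigl[\tau_k^X(t)-\tau_k^Y(t)\bigr]+\sum_k\frac{l_k}{V}\bigl[B_k(V\tau_k^X(t))-B_k(V\tau_k^Y(t))\bigr]+\sum_k\frac{l_k}{V}\mathcal{E}_k(t),
\end{equation*}
where $\mathcal{E}_k(t):=P_k(V\tau_k^X(t))-V\tau_k^X(t)-B_k(V\tau_k^X(t))$ is the KMT remainder, bounded pathwise by $\log(V\tau_k^X(t)\vee 2)\,\Gamma_k=O(\log V)$ uniformly on $[0,T]$.

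Next I would control each of the three sums on a fixed time window $[0,T]$. The first sum is $\sum_k l_k\int_0^t [f_k(X(s))-f_k(Y(s))]\,ds$, bounded by $L\int_0^t|X(s)-Y(s)|\,ds$ on a stopped set where the species concentrations remain in a compact region (so that the polynomial rates $f_k$ are Lipschitz with a constant $L$; one can enforce this by stopping at the first exit of either process from a large ball and showing the exit time exceeds $T$ with high probability by the standard moment estimates for \eqref{poss}, \eqref{diff}). The third sum contributes $O(\log V/V)$ directly from the KMT bound. The main obstacle, and the step requiring the most care, is the middle sum: since Brownian motion is only H\"older-$1/2^-$, a naive bound gives $|B_k(V\tau_k^X)-B_k(V\tau_k^Y)|\lesssim \sqrt{V|\tau_k^X-\tau_k^Y|\log V}$, which when divided by $V$ yields $O(\sqrt{|\tau_k^X-\tau_k^Y|\log V/V})$. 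Using Levy's modulus of continuity for $B_k$ on $[0,CVT]$ and the Lipschitz bound $V|\tau_k^X-\tau_k^Y|\leq VL\int_0^t|X-Y|\,ds$, this term becomes dominated by $\sqrt{(L\log V/V)\int_0^t|X-Y|\,ds}$.

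Finally, I would combine the three estimates into the pathwise inequality
\begin{equation*}
u(t)\;\leq\;C_1\frac{\log V}{V}+C_2\int_0^t u(s)\,ds+C_3\sqrt{\frac{\log V}{V}\int_0^t u(s)\,ds},\qquad u(t):=\sup_{0\leq r\leq t}|X(r)-Y(r)|,
\end{equation*}
and apply a Bihari/Gronwall-type argument (absorbing the square-root term by Young's inequality $2\sqrt{ab}\leq \varepsilon a+\varepsilon^{-1}b$) to conclude $u(T)=O(\log V/V)$ on the localization event. Undoing the localization (whose complement has probability tending to zero faster than any polynomial in $1/V$) recovers the claimed pathwise order $\sup_{0\leq t\leq T}|X(t)-Y(t)|=O(\log V/V)$ as $V\to\infty$. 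The delicate point throughout is that the Brownian time-change term must be bootstrapped against the Gronwall variable rather than bounded in isolation; this is what forces the square-root detour and the Young inequality trick.
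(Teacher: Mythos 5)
The paper does not actually prove Theorem \ref{distance}: it is imported verbatim from \cite{mozgunov2018review} (ultimately Kurtz's strong approximation theorem for density-dependent Markov chains), so there is no in-paper argument to compare yours against line by line. That said, your sketch follows exactly the route by which this result is established in the literature: KMT coupling of each $P_k$ with a Brownian motion as in Lemma \ref{lem21}, the three-term decomposition of $X(t)-Y(t)$ into a drift difference, a time-changed Brownian increment, and the KMT remainder, and a Gronwall closure. The decomposition is algebraically correct for \eqref{poss} and \eqref{diff}, the remainder term is correctly identified as $O(\log V/V)$ pathwise, and the Young-inequality trick to absorb the square-root term into the Gronwall inequality does yield the claimed rate. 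Two points would need more care in a full write-up. First, the modulus-of-continuity bound for $B_k$ on $[0,CVT]$ is applied with a \emph{random} increment size $\delta=V|\tau_k^X-\tau_k^Y|$; to make this rigorous you need a uniform statement of the form $\sup\{|B(u)-B(v)|: u,v\le CVT,\ |u-v|\le\delta\}\lesssim\sqrt{\delta\log(CVT/\delta)}$ holding simultaneously over the relevant range of $\delta$, together with an argument that the logarithmic factor stays $O(\log V)$; this is standard but not free. Second, the localization step (exit time from a large ball exceeding $T$ with high probability, and the claim that the exceptional set has super-polynomially small probability) requires growth or moment assumptions on the polynomial rates $f_k$ that the theorem as stated does not make explicit --- indeed the theorem is stated informally in the paper and inherits this vagueness. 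Neither point is a flaw in your strategy; both are gaps any complete proof must fill, and the cited source fills them under additional hypotheses.
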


The error of diffusion approximation is proportional to $\frac{\log V}{V}$, which converges to
0 as $V\rightarrow\infty$. In macroscopic chemical reaction system $V$
is at the magnitude of Avogadro's number. Therefore, the entire diffusion
term can be safely ignored. However, in many ecologic systems or
cellular chemical reaction systems, the effective volume cannot be
simply treated as infinity. This motivates us to consider the
sensitivity of the quasi-stationary-distributions (QSDs) against the
diffusion approximation. For any finite capacity $V$, the finite time
error of the diffusion approximation can be explicitly
simulated. Paper \cite{mozgunov2018review} gives the constructive 
procedure to generate discretized trajectories of the two processes
$X(t)$ and $Y(t)$ on the same probability space that they
stay close to each other \textit{trajectory by trajectory} with
probability one. We apply the algorithm to compute the finite time
error in section 3.   

\subsection{Coupling times}
In this paper, we use the coupling argument to connect finite time
error and the distance between QSDs. Let $\mu$ and $\nu$ be two
probability measures on a measurable space $(\mathcal{X}, \mathcal{B}(\mathcal{X}))$. A {\it coupling}
between $\mu$ and $\nu$ is a probability measure $\gamma$ on the product space
$(\mathcal{X}\times\mathcal{X}, \mathcal{B}(\mathcal{X})\times\mathcal{B}(\mathcal{X}))$ such that two marginal distribution of $\gamma$ are $\mu$
and $\nu$ respectively. 
\begin{definition}(Wasserstein distance) Let $d$ be a metric on the
  state space $S$. For probability measures $\mu$ and $\nu$ on $S$,
  the Wasserstein distance between $\mu$ and $\nu$ for $d$ is given by
\begin{equation}
\label{wass}
\begin{aligned}
    d_w(\mu,\nu) &=\inf\{\mathbb{E}_\gamma[d(x,y)]: \gamma \ \text{is a coupling of}\  \mu\  and\  \nu\}\\
    &=\inf \{\int d(x,y)\gamma(dx,dy): \gamma\  \text{is a coupling of} \ \mu \ and\  \nu\},
\end{aligned}
\end{equation}
\end{definition}
In this paper, without further specification, we assume that the
1-Wasserstein distance is induced by $d(x,y)= \min\{1, \|x-y\|\}$, where $\|x-y\|$ is the Euclidean norm.

Let $Z^{(1)}_{t}$ and $Z^{(2)}_{t}$ be two stochastic processes. A coupling
between $Z^{(1)}_{t}$ and $Z^{(2)}_{t}$ can be defined in the same way on the
space of paths. Throughout this paper, we assume $Z^{(1)}_{t + s} = Z^{(2)}_{t +
  s}$ for all $s > 0$ if $Z^{(1)}_{t} = Z^{(2)}_{t}$. In other words, $Z^{(1)}_{t}$ and
$Z^{(2)}_{t}$ stay together after their first meet. 

\begin{definition}(Coupling time)
The coupling time of a Markov coupling $(Z^{(1)}_t,Z^{(2)}_t)$ is a random variable given by
\begin{equation}
    \tau_c\overset{\text{def}}{=}\inf\{t\geq 0: Z_t^{(1)}=Z_t^{(2)}\}.
\end{equation}
\end{definition}

\begin{definition}(Successful coupling)
A coupling $(Z^{(1)}_t, Z^{(2)}_t)$ of Markov processes $Z^{(1)}$ and $Z^{(2)}$ is said to be successful if
\begin{equation}
    \mathbb{P}(\tau_c<\infty)=1.
\end{equation}
\end{definition}

We use the following reflection coupling to couple two diffusion
processes when they are far away from each other. 

\begin{definition}(Reflection coupling)
  Let $Z^{(1)}_{t}$ and $Z^{(2)}_{t}$ be two solutions of a stochastic
  differential equation
$$
  \mathrm{d}Z_{t} = f(Z_{t}) \mathrm{d} t + \sigma(Z_{t}) \mathrm{d}B_{t}
  $$
  when starting from different initial distributions. 
  A reflection coupling of $Z^{(1)}_{t}$ and $Z^{(2)}_{t}$ is made by reflecting
  the noise term about the orthogonal hyperplane at the midpoint
  between $Z^{(1)}_{t}$ and $Z^{(2)}_{t}$: 
\begin{equation}
\label{ref_coupling}
  \begin{aligned}
    \mathrm{d}Z^{(1)}_{t} &=  f(Z^{(1)}_{t}) \mathrm{d}t + \sigma(Z^{(1)}_{t})
    \mathrm{d}B_{t}\\
    \mathrm{d}Z^{(2)}_{t} &=  f(Z^{(2)}_{t}) \mathrm{d}t + \sigma(Z^{(2)}_{t}) (I-2\mathbf{e}\mathbf{e}^T)\mathrm{d}B_{t}
    \end{aligned}
\end{equation}
where $B$ is a standard Wiener process, and 
\begin{equation*}
    \mathbf{e}=\frac{1}{\|\sigma^{-1}(Z^{(1)}_{t}-Z^{(2)}_{t})\|}\sigma^{-1}(Z^{(1)}_{t}
    - Z^{(2)}_{t})
\end{equation*}
is a unite vector.
\end{definition}

We remark that the reflection coupling requires $\sigma(Z_t)$ in equation \eqref{ref_coupling} to be an invertible matrix. This is often not satisfied in the diffusion approximation \eqref{diff} because the number of Wiener processes in equation \eqref{diff} equals the number of reactions. Hence we often need to find a equivalent diffusion process with an invertible $\sigma$. See numerical examples for additional details. 

\medskip

The following maximal coupling is used to couple two processes that
are close to each other. 

\begin{definition}(Maximal coupling)
The maximal coupling looks for the maximal coupling probability for
the next step of $Z^{(1)}_t$ and $Z^{(2)}_t$. Assume
$Z^{(1)}_{t-1}$ and $Z^{(2)}_{t-1}$ are known and the
probability density function of $Z^{(1)}_t$ and
$Z^{(2)}_t$ is easy to compute. Following
\cite{jacob2020unbiased, johnson1998coupling}, the update of
$Z^{(1)}_t$ and $Z^{(2)}_t$ in Algorithm \ref{max} maximizes
the probability of coupling. 
\end{definition}

\begin{algorithm}
\caption{Maximal coupling}
\label{max}
\begin{algorithmic}[l]
  \REQUIRE $Z^{(1)}_{t-1}$ and $Z^{(2)}_{t-1}$ \\
\ENSURE $Z^{(1)}_t$ and $Z^{(2)}_t$, and $\tau_c$ if coupled
\STATE Compute probability density functions $p^{(1)}(z)$ and $p^{(2)}(z)$
\STATE Sample $Z^{(1)}_t$ and calculate $r=\mathcal{U}p^{(1)}(Z^{(1)}_t)$, where $\mathcal{U}$ is uniformly distributed on [0,1]
\IF{$r<p^{(2)}(Z^{(1)}_t)$}
\STATE $ Z^{(2)}_t=Z^{(1)}_t, \tau_c=t$
\ELSE
\STATE Sample $Z^{(2)}_t$ and calculate $r'=\mathcal{V} p^{(2)}(Z^{(2)}_t)$, where $\mathcal{V}$ is uniformly distributed on [0,1]
\WHILE{$r'< p^{(1)}(Z^{(2)}_t)$}
\STATE Resample $Z^{(2)}_t$ and $\mathcal{V}$. Recalculate $r'=\mathcal{V} p^{(2)}(Z^{(2)}_t)$
\ENDWHILE
\STATE $\tau_c$ is still undetermined
\ENDIF
\end{algorithmic}
\end{algorithm}

\subsection{Paired trajectories of Poisson process and of the diffusion process}
Recall that according to Lemma \ref{lem21} a unit-rate Poisson process has a strong diffusion approximation. Hence equation \eqref{poss} also has a strong approximation given by equation
\eqref{diff}. As the processes $P_k(\cdot)$ and
$B_k(\cdot)$ are continuous time processes, we apply the $\tau$-leaping
approximation for equation \eqref{poss} with the same step size $h$. This gives
\begin{equation}
  \label{possN}
    \hat{X}_{n+1} = \hat{X}_n + \sum_k
    \frac{l_k}{V}\left[P_k\left(Vh\sum^{n}_{m=0}f_k(\hat{X}_m)\right) -
    P_k\left(Vh\sum^{n-1}_{m=0}f_k(\hat{X}_m)\right)\right] 
\end{equation}
with $\hat{X}_0=X_0$. Similarly, the discretized
approximation of equation \eqref{diff} using the Euler-Maruyama method reads
\begin{equation}
\begin{aligned}
  \label{diffN}
   \hat{Y}_{n+1} &= \hat{Y}_n + \sum_k \frac{l_k}{V}(V h f_k(\hat{Y}_n))\\
   &+\sum_k
   \frac{l_k}{V}\left[B_k\left(Vh\sum^{n}_{m=0}f_k(\hat{Y}_m)\right) -
   B_k\left(Vh\sum^{n-1}_{m=0}f_k(\hat{Y}_m)\right)\right]  
\end{aligned}
\end{equation}
with initial condition $\hat{Y}_0=Y_0$.

The paired trajectories of $P_k(t)$ and $B_k(t)$ can be numerically generated by applying the KMT algorithm. The KMT algorithm actually generates a sequence of standard Poisson random variables $\{P_n\}$ and a sequence of standard normal random variables $\{W_n\}$, such that $\sum_{n = 1}^{N}P_n$ is approximated by $N + \sum_{n = 1}^{N} W_n$ for each finite $N$. Then after a re-scaling, one obtains a pair of discretized trajectories of $P_k(t)$ and $B_k(t)$ respectively. We refer \cite{mozgunov2018review} for a detailed review of the KMT algorithm. 

\section{Sensitivity of diffusion approximation}

\subsection{Quasi-stationary distribution}
Let $\mathbf{X} = \{X(t)\}$ and $\mathbf{\hat{X}}=\{\hat{X}_n\}$ (resp. $\mathbf{Y} =\{
Y(t)\}$ and $\mathbf{\hat{Y}}=\{\hat{Y}_n\}$) be the
stochastic process given by \eqref{poss} (resp. \eqref{diff}) and a
numerical approximation with step size $h$, respectively. Needless to
say a diffusion process is much easier to study than a Poisson process
with jumps. One natural question here is that how much the long time
dynamics of $\mathbf{X}$ is preserved by its diffusion
approximation. This problem is more complicated than it looks because both
$\mathbf{X}$ and $\mathbf{Y}$ have natural domain
$\mathbb{R}^{d}_{+}$. When the number of molecules of one species
reaches $0$, the process exits from its domain due to extinction. It
is common for equation \eqref{poss} or equation \eqref{diff} to have
finite time extinction. To
see this, consider the 1D version of equation \eqref{diff}:
\begin{equation}
  \label{1D}
  \mathrm{d}Y(t) = f( Y(t) ) \mathrm{d}t +
  \frac{1}{\sqrt{V}} \sqrt{f( Y(t))} \mathrm{d}B_{t} \,.
\end{equation}
Let $H(x) = x^{-1} $ be a test function. Applying Ito's formula then
take the expectation, we have
$$
  \frac{\mathrm{d}}{\mathrm{d}t} \mathbb{E}[ H(Y(t))] =
  f(Y(t))\left (\frac{2}{(Y(t))^{3}} -
  \frac{1}{(Y(t))^{2}} \right ) \,.
$$
If $f(Y(t)) = c Y(t) $ for a constant $c$, we have
$$
  \frac{\mathrm{d}}{\mathrm{d}t} \mathbb{E}[ H(Y(t))]  \geq 2c
  \left (\mathbb{E}[ H(Y(t))] \right )^{2} \,,
$$
which blows up to $\infty$ in finite time. Hence $Y(t)$ has
strictly positive extinction probability in finite time. The
calculation above fits the setting of many mass-action systems.

Therefore, to prevent finite time extinction, usually one needs constant influx of each species. That is why often we need to study the quasi-stationary distribution (QSD) instead of the invariant probability
distribution. Below we introduce the QSD and its sampling method only
for $\mathbf{X}$, as the case of $\mathbf{Y}$ is analogous. 

Let $\partial \mathcal{X} = \mathbb{R}^{d} \setminus
\mathbb{R}^{d}_{+}$ be the absorbing set of $\mathbf{X}$. The
quasi-stationary distribution (QSD) is an invariant probability measure
conditioning on $\mathbf{X}$ has not hit the absorbing set yet. We
further define
$$
  \tau = \inf\{t>0: X(t) \in\partial\mathcal{X} \}
$$
as the first passage time to $\partial \mathcal{X}$. 

\begin{definition}
A probability measure $\mu$ on $\mathbb{R}^{d}_{+}$ is called a
quasi-stationary distribution(QSD) of the Markov process $\mathbf{X}$ with
an absorbing set $\partial\mathcal{X}$, if for every measurable set $C\subset
\mathbb{R}^{d}_{+}$ 
\begin{equation}
    \mathbb{P}_{\mu}[ X(t) \in C|\tau>t]=\mu(C), \ t\geq 0,
    \label{1}
  \end{equation}
\end{definition}
\begin{definition}
If there is a probability measure $\mu$ exists such that
\begin{equation}
    \lim_{t\rightarrow\infty}\mathbb{P}_{x}[  X(t) \in
    C|\tau>t]=\mu(C),\ \forall x\in \mathbb{R}^{d}_{+} \,.
    \label{2}
\end{equation}
in which case we also say that $\mu$ is a quasi-limiting distribution(QLD).
\end{definition}

\begin{remark}
The limiting probability measure given by equation (\ref{2}), or the QLD, is also called the Yaglom limit. A QLD must be a QSD. Under some
mild assumptions about ergodicity, a QSD is also a QLD
\cite{darroch1965quasi}. 

If the first passage time of $\bm{X}$ to $\partial\mathcal{X}$ is $\infty$ with probability one, $\{\tau > t\}$ is the full probability space. As a result, QSD in equation \eqref{1} becomes the invariant probability measure and QLD in equation \eqref{2} becomes the limiting probability measure (which is also invariant). Therefore, when the mass action system admits an invariant probability measure instead of a QSD, all our arguments and algorithms still apply.

\end{remark}

When we define the numerical processes \eqref{possN} and
\eqref{diffN}, we need to specify the rule of regeneration such that
they both sample from QSDs as the time approaches to infinity. To
sample from QSD, we need to regenerate a sample once it hits the
absorbing set. Therefore, in addition to $\hat{X}_{n}$, we also need to update a temporal
occupation measure
$$
  \mu_{n} = \frac{1}{n}\sum_{k = 0}^{n-1} \delta_{\hat{X}_{k}} \,.
$$
If the numerical scheme gives $\hat{X}_{n+1} \in \partial
\mathcal{X}$, we immediately regenerate $\hat{X}_{n+1}$ from
$\mu_{n}$. More precisely,  let the transition kernel
of the numerical scheme of $\hat{X}_{n}$ (without resampling) be $\hat{Q}$. Then
$\hat{Q}$ has an absorbing set $\partial \mathcal{X}$ such that $\hat{Q}( \partial
\mathcal{X}, \partial \mathcal{X}) = 1$. The transition kernel of
$\hat{X}_{n}$ is the sum of $\hat{Q}$ and the regeneration
measure such that
$$
  \mathbb{P}[ \hat{X}_{n+1} \in A \,|\, \hat{X}_{n}
  = x] = \hat{Q}(x, A) + \hat{Q}(x, \partial \mathcal{X}) \mu_{n}(A) \,.
$$

The following convergence result follows from \cite{benaim2018stochastic}.

\begin{proposition}[Theorem 2.5 in \cite{benaim2018stochastic}]
  \label{prop1}
Let $\hat{\mu}$ be the QSD of the numerical process
$\hat{X}_n$. Under suitable assumptions about $\hat{X}_n$, the occupation measure
$\mu_{n}$ converges to the QSD $\hat{\mu}$ as $n \rightarrow \infty$.
\end{proposition}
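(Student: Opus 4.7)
The plan is to view the recursion for the occupation measure as a stochastic approximation scheme on the space of probability measures and apply the ODE method. Writing
\begin{equation*}
\mu_{n+1}-\mu_{n} \;=\; \frac{1}{n+1}\bigl(\delta_{\hat{X}_{n+1}}-\mu_{n}\bigr),
\end{equation*}
we see the step sizes $1/(n+1)$ match the classical Robbins--Monro setting, so the asymptotic behavior of $\mu_{n}$ should be governed by a mean-field semiflow on probability measures whose equilibria coincide with the QSDs.

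First, I would set up a martingale decomposition. Using the transition rule
$\mathbb{P}[\hat{X}_{n+1}\in A\mid \mathcal{F}_{n}] = \hat{Q}(\hat{X}_{n},A)+\hat{Q}(\hat{X}_{n},\partial\mathcal{X})\,\mu_{n}(A)$, define the mean-field operator on probability measures on $\mathbb{R}^{d}_{+}$ by
\begin{equation*}
\Phi(\mu)(A)\;=\;\int \bigl[\hat{Q}(x,A)+\hat{Q}(x,\partial\mathcal{X})\,\mu(A)\bigr]\,\mu(dx).
\end{equation*}
Then the increment splits as $\mu_{n+1}-\mu_{n}=\tfrac{1}{n+1}\bigl[\Phi(\mu_{n})-\mu_{n}\bigr]+\tfrac{1}{n+1}\,\xi_{n+1}$, where $\xi_{n+1}:=\delta_{\hat{X}_{n+1}}-\mathbb{E}[\delta_{\hat{X}_{n+1}}\mid\mathcal{F}_{n}]$ plus an $O(1/n)$ bias term controlled by the fluctuation of $\hat{X}_{n}$ around its conditional law. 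A direct computation shows the fixed points of $\Phi$ are exactly the QSDs of the regenerated chain.

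Second, I would invoke the stochastic approximation convergence theorem (in the version for compact or tight-valued recursions) to conclude that the continuous-time linear interpolant of $\mu_{n}$ is an asymptotic pseudo-trajectory of the semiflow $\dot{\mu}=\Phi(\mu)-\mu$. The three standard ingredients are: (i) tightness of $\{\mu_{n}\}$, so the $\omega$-limit set is nonempty; (ii) a uniform $L^{2}$ bound on the martingale-difference part $\xi_{n+1}$ evaluated against a convergence-determining class of test functions, giving vanishing of the noise via a Burkholder-type estimate; and (iii) identification of the $\omega$-limit set of the interpolated trajectory with the internally chain-transitive sets of the semiflow. Under the suitable assumptions (a Doeblin-type minorization for $\hat{Q}$ away from $\partial\mathcal{X}$ together with a Lyapunov condition pulling mass back from the boundary), the semiflow has $\hat{\mu}$ as its unique globally attracting equilibrium.

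The hard part is ingredient (i): because $\mathbb{R}^{d}_{+}$ is noncompact and the dynamics can pour mass toward $\partial\mathcal{X}$ through repeated extinctions, tightness of $\mu_{n}$ is not automatic and is exactly where the "suitable assumptions" bite. I would handle this by constructing a Lyapunov function $V:\mathbb{R}^{d}_{+}\to[1,\infty)$ with $\hat{Q}V\leq \alpha V+C$ on $\mathbb{R}^{d}_{+}$ for some $\alpha<1$, and using a stochastic Foster--Lyapunov argument, iterated along the regenerations, to bound $\mu_{n}(V)$ uniformly in $n$ almost surely. Once tightness is secured and $\hat\mu$ is verified to be the unique globally attracting fixed point of $\Phi-\mathrm{Id}$, the Benaïm--Cloez--Panloup machinery yields $\mu_{n}\Rightarrow\hat\mu$ almost surely, which is the conclusion of Proposition~\ref{prop1}.
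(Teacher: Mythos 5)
The paper does not prove this proposition at all: it is imported verbatim as Theorem 2.5 of the cited reference of Bena\"im, Cloez and Panloup, so there is no in-paper argument to compare against. Your sketch correctly identifies the strategy that the cited work actually uses --- recast the occupation-measure recursion $\mu_{n+1}-\mu_n=\tfrac{1}{n+1}(\delta_{\hat X_{n+1}}-\mu_n)$ as a stochastic approximation with steps $1/(n+1)$, identify the mean-field kernel $K_\mu(x,\cdot)=\hat Q(x,\cdot)+\hat Q(x,\partial\mathcal X)\mu(\cdot)$ whose fixed measures are the QSDs, pass to the limiting measure-valued ODE via asymptotic pseudo-trajectories, and secure tightness with a Foster--Lyapunov condition. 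That is the right skeleton, and your observation that tightness is where the ``suitable assumptions'' do the work is accurate.

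There is, however, one genuine gap in the decomposition. You write the increment as $\tfrac{1}{n+1}[\Phi(\mu_n)-\mu_n]+\tfrac{1}{n+1}\xi_{n+1}$ with $\xi_{n+1}$ a martingale difference ``plus an $O(1/n)$ bias.'' But $\mathbb{E}[\delta_{\hat X_{n+1}}\mid\mathcal F_n]=K_{\mu_n}(\hat X_n,\cdot)$, whereas $\Phi(\mu_n)=\int K_{\mu_n}(x,\cdot)\,\mu_n(dx)$; their difference $K_{\mu_n}(\hat X_n,\cdot)-\mu_nK_{\mu_n}$ is an $O(1)$ quantity at every step, not $O(1/n)$, because $\hat X_n$ is a single point and $\mu_n$ is a full measure. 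Controlling this term is the technical heart of the cited proof: one must exploit the two-timescale structure (the chain $\hat X_n$ mixes on the fast scale while $\mu_n$ moves on the slow scale) and solve a Poisson equation for the frozen kernel $K_{\mu}$ so that the accumulated bias telescopes into a summable remainder plus a genuine martingale. Without that step the asymptotic-pseudo-trajectory property does not follow, so as written your argument would fail at ingredient (ii). The rest of the outline (fixed points of $\Phi$ are QSDs, uniqueness and global attraction under a Doeblin-plus-Lyapunov hypothesis) is sound.
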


To study the sensitivity of diffusion approximation, we also need a
theoretical process $\mathbf{\tilde{X}} = \{ \tilde{X}_{n} \}$ that directly regenerate
from the QSD $\hat{\mu}$ once exit to the boundary. Recall that
$\hat{Q}$ is the transition kernel of $\hat{X}_{n}$ (without
resampling). The transition kernel of $\mathbf{\tilde{X}}$ is
$$
  \tilde{P}(x, \cdot) = \hat{Q}(x, \cdot) + \hat{Q}(x, \partial
  \mathcal{X} ) \hat{\mu}  (\cdot)
  $$
  for all $x \in \mathbb{R}^{d}_{+}$. Note that $\hat{X}_{n}$ is
  not a Markov process (but $( \hat{X}_{n}, \mu_{n})$ is a
  Markov process). But $\mathbf{\tilde{X}}$ is a homogeneous Markov
  process with an invariant probability measure $\mu$. The case of
  $Y(t)$ is analogous. We denote the numerical process that
  resample from a temporal occupation measure by $\mathbf{\hat{Y}}=\{\hat{Y}_{n}\}$,
  and the Markov process that directly resample from QSD by
  $\mathbf{\tilde{Y}}=\{\tilde{Y_n}\}$.

\subsection{Decomposition of error term} 
Let $P_{X}$ and $\tilde{P}_{X}$ be the transition kernels of
$X(t)$ and $\tilde{X}_{n}$ respectively. Let $P_{Y}$ and
$\tilde{P}_{Y}$ be that of $Y(t)$ and $\tilde{Y}_{n}$
respectively. Denote the QSDs of $X(t)$, $\tilde{X}_{n}$,
$Y(t)$ and $\tilde{Y}_{n}$ by $\pi_{X}$, $\hat{\pi}_{X}$,
$\pi_{Y}$, and $\hat{\pi}_{Y}$, respectively. The quantity that we are
interested in is $d_{w}(\pi_{X}, \pi_{Y})$.

Let $T$ be a fixed constant.  Motivated by \cite{johndrow2017error},
the following decomposition follows easily by the triangle inequality
and the invariance.  
\begin{equation}
    d_w(\pi_X, \pi_Y) \leq d_w(\pi_X, \hat{\pi}_X) + d_w(\hat{\pi}_X, \hat{\pi}_Y) + d_w(\hat{\pi}_Y, \pi_Y) 
\end{equation}
The sensitivity of invariant probability against time discretization
has been addressed in \cite{dobson2021using}. When the time step size of 
the time discretization is small enough, the invariant probability
measure $\pi_Y$ is close to the numerical invariant probability
measure $\hat{\pi}_Y$. The case of QSD is analogous. Hence the third
term $d_w(\pi_Y, \hat{\pi}_Y)$ is proportional to step size $h$. The
estimation of the first term $d_w(\pi_X, \hat{\pi}_X)$ can be obtained
by some linear algebraic calculation.

\begin{theorem}
Let $X(t)$ be a continuous time Markov chain with finite state space and $\hat{X}$ be its tau-leaping approximation with step size $h$. Suppose that  $\pi$ and $\hat{\pi}$ be the true QSD and the numerical approximation of QSD respectively. Let $h$ be the time step size of numerical process. If the generating matrix of $X(t)$ is irreducible, then 
\begin{equation*}
    \|\pi-\hat{\pi}\|\sim O(h)
\end{equation*}
for $0 < h \ll 1$.
\end{theorem}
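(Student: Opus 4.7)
The plan is to recast both $\pi$ and $\hat{\pi}$ as left Perron--Frobenius eigenvectors of closely related matrices, and then deduce the $O(h)$ bound from standard analytic perturbation theory for simple eigenvalues.

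First, I would decompose the finite state space into transient states $\mathcal{T}$ and the absorbing set $\partial\mathcal{X}$, and let $Q_T$ denote the restriction of the generator matrix of $X(t)$ to $\mathcal{T}$. The QSD $\pi$ is then the unique strictly positive probability vector satisfying $\pi^\top Q_T = -\lambda\, \pi^\top$, where $-\lambda$ is the eigenvalue of $Q_T$ with largest real part. Irreducibility of $Q$ together with the Perron--Frobenius theorem applied to $Q_T + \alpha I$ for large $\alpha$ (or equivalently to $e^{Q_T t}$) guarantees that this eigenvalue is real, simple, and has a strictly positive left eigenvector. The tau-leaping scheme with step $h$ analogously defines a sub-stochastic transition matrix $\hat{P}_T$ on $\mathcal{T}$, and $\hat{\pi}$ is its unique positive left Perron eigenvector, with eigenvalue $\rho_h < 1$ that is simple for $h$ sufficiently small by continuity of the spectrum.

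Second, I would compare $Q_T$ with the rescaled numerical generator $\hat{Q}_T := (\hat{P}_T - I)/h$. Expanding the one-step tau-leaping probabilities using the Poisson tail estimate $\mathbb{P}[P_k(h\lambda_k(x)) = m]$ gives $\hat{P}_T(x,y) = h\, q_{xy} + O(h^2)$ for $x \neq y$ and $\hat{P}_T(x,x) = 1 + h\, q_{xx} + O(h^2)$, with remainders uniform over the finite state space. Therefore $\hat{Q}_T = Q_T + h R_h$ for a uniformly bounded family $\{R_h\}$, and the numerical eigenvalue equation becomes $\hat{\pi}^\top \hat{Q}_T = \frac{\rho_h - 1}{h}\, \hat{\pi}^\top$. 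Both $\pi$ and $\hat{\pi}$ are now left Perron eigenvectors of matrices that differ by $O(h)$. Analytic perturbation theory for isolated simple eigenvalues then provides an analytic branch of eigenvalue--eigenvector pairs in a neighborhood of $Q_T$, so that, after normalizing both to be probability vectors, $\hat{\pi} = \pi + O(h)$, which is the claim.

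The main obstacle I anticipate is justifying a uniform spectral gap in the perturbation step. If one tried to compare $\hat{P}_T$ with $e^{Q_T h}$ directly, all sub-stochastic eigenvalues cluster near $1$ with spacings of order $h$, and a naive bound of the form $\|\hat{\pi}-\pi\| \lesssim \|\hat{P}_T - e^{Q_T h}\|/(\text{gap})$ degrades as $h\to 0$. The rescaling to $\hat{Q}_T$ is precisely what avoids this: the gap between $-\lambda$ and the rest of the spectrum of $Q_T$ is an $O(1)$ quantity inherited from the continuous-time generator and independent of $h$, so the simple-eigenvalue perturbation theorem applies with a constant that does not blow up. The remaining bookkeeping reduces to a careful Poisson expansion delivering uniform $O(h^2)$ remainders on the finite state space, after which the $O(h)$ estimate follows.
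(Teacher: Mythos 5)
Your argument is correct, and it rests on the same core idea as the paper's proof---perturbation of a simple Perron eigenvector---but it is organized differently in a way worth noting. The paper takes the tau-leaping one-step kernel to be exactly $I+hQ$, compares it to $e^{hQ}$ along the segment $A(t)=I+hQ+tR(h)$ with $A(h^2)=e^{hQ}$, and applies the Deutsch--Neumann group-inverse formula $\pi'(0)=S^{\sharp}A'(0)\pi(0)$; since the relevant gap at the transition-matrix level is only $O(h)$, the group inverse satisfies $S^{\sharp}=O(h^{-1})$, and the $O(h)$ bound comes from balancing this against the $O(h^2)$ length of the perturbation interval. Your rescaling to $\hat{Q}_T=(\hat{P}_T-I)/h$ is arithmetically the same computation seen from the generator level, where the spectral gap of $Q_T$ is $O(1)$ and the perturbation is $O(h)$, so the perturbation constant is uniform in $h$; your remark about why a naive comparison of $\hat{P}_T$ with $e^{Q_T h}$ degrades is precisely the issue the paper's $O(h^{-1})\times O(h^2)$ bookkeeping is silently resolving. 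Two points where your write-up is actually more careful than the paper's: you restrict to the sub-generator $Q_T$ on the transient states, which is the correct object whose Perron eigenvector is the QSD (the paper works with ``the generating matrix'' without making this restriction explicit), and you compare the \emph{actual} tau-leaping kernel to $I+hQ_T$ via a Poisson expansion with uniform $O(h^2)$ remainders, whereas the paper simply posits $\hat{\pi}^{T}(I+hQ)=\hat{\lambda}\hat{\pi}^{T}$, i.e., identifies the tau-leaping kernel with the Euler kernel. The price of your route is that you must supply that expansion (including a convention for leaps that exit the state space, an $O(h^2)$ event on a finite state space), but this is routine; both proofs deliver the same $O(h)$ conclusion.
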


\begin{proof}
This proof follows the standard argument of eigenvector perturbation result. The case of stationary distribution is proved in \cite{meyer1994sensitivity}. Here we follow the argument in \cite{deutsch1985first} to prove a similar result for QSDs. Let $Q$ be the generating matrix of $X(t)$. Because $\pi$ is true QSD and $\hat{\pi}$ is the numerical approximation of QSD, we have
\begin{equation*}
    \pi^T e^{hQ} = \lambda \pi^T, \ \hat{\pi}^T (I+hQ) = \hat{\lambda}\hat{\pi}^T,
\end{equation*}
where $\lambda$ and $\hat{\lambda}$ are simple eigenvalues. Define a function
\begin{equation*}
A(t)\overset{\text{def}}{=}I+hQ+tR(h), 
\end{equation*}
where $R(h)$ is an $O(1)$ matrix given by the Taylor expansion $e^{hQ} = I+hQ+h^2 R(h)$. Then we have $A(0)=I+hQ$ and $A(h^2)=e^{hQ}$. Note that $A(0)$ is irreducible for all sufficiently small $h$ because $Q$ is also irreducible. Let $\pi(t)$ be the first eigenvector of $A(t)$ normalized to $1$ in $l_1$ norm. Then the sensitivity of $\pi$ is reduced to the derivative of $A(t)$.

Since $\pi$ is normalized to $1$ in $l_1$ norm, it follows from \cite{deutsch1985first} Section 3 that 
$$
\pi'(0) = S^{\sharp} A'(0) \pi(0)  \,,
$$
where $S = \lambda I - A(0)$, and $S^{\sharp}$ is the group inverse of $S$. (We refer \cite{deutsch1985first} for further discussion of the group inverse and derivative of Perron vector.) 

When $h$ is small, we have $1 - \lambda = O(h)$. Hence $S = I - O(h) - I - h Q$ is an $O(h)$ small matrix. This means $S^{\sharp} = O(h^{-1})$. In addition $A'(0) = R = O(1)$ by definition. Hence $\pi'(0) = O(h^{-1})$. Since $\hat{\pi} = \pi(h^2)$, we have 
$$
\| \pi - \hat{\pi} \| = O(h^{-1})\times O(h^2) = O(h) \,.
$$

This completes the proof. 

\end{proof}

Therefore, we have that $d_w(\pi_X, \hat{\pi}_X)=O(h)$ and $d_w(\pi_Y, \hat{\pi}_Y)=O(h)$, which make the second error term be the key part.
The second error term is the difference between numerical Poisson
process of a mass-action system and its corresponding numerical
diffusion process.

\begin{proposition}
Let $T>0$ be a fixed
constant. We can decompose $d_w(\hat{\pi}_X, \hat{\pi}_Y)$ via the
following inequality: 
\begin{equation}
    d_w(\hat{\pi}_X, \hat{\pi}_Y) \leq d_w(\hat{\pi}_X \tilde{P}^T_X, \hat{\pi}_X \tilde{P}^T_Y ) + d_w(\hat{\pi}_X\tilde{P}^T_Y, \hat{\pi}_Y\tilde{P}^T_Y)
\end{equation}
\end{proposition}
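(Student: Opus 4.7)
The plan is to observe that both sides of the desired inequality differ only by the insertion of the transition operators $\tilde{P}_X^T$ and $\tilde{P}_Y^T$, which act as identities on the corresponding invariant measures. The proof is therefore just invariance plus the triangle inequality for the 1-Wasserstein metric, and no substantive analysis is required.

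First I would invoke the construction of $\tilde{X}_n$ and $\tilde{Y}_n$ from the preceding subsection: since $\tilde{X}_n$ is a homogeneous Markov chain with transition kernel $\tilde{P}_X$ whose QSD is $\hat{\pi}_X$, and $\hat{\pi}_X$ is (by construction) an invariant probability measure for $\tilde{P}_X$, iterating gives $\hat{\pi}_X \tilde{P}_X^T = \hat{\pi}_X$ for every $T \ge 0$. The analogous identity $\hat{\pi}_Y \tilde{P}_Y^T = \hat{\pi}_Y$ holds on the diffusion side by exactly the same reasoning.

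Second I would rewrite the left-hand side using these two identities and apply the triangle inequality for $d_w$ with the intermediate measure $\hat{\pi}_X \tilde{P}_Y^T$:
\begin{equation*}
d_w(\hat{\pi}_X,\hat{\pi}_Y)
= d_w(\hat{\pi}_X \tilde{P}_X^T,\,\hat{\pi}_Y \tilde{P}_Y^T)
\le d_w(\hat{\pi}_X \tilde{P}_X^T,\,\hat{\pi}_X \tilde{P}_Y^T)
 + d_w(\hat{\pi}_X \tilde{P}_Y^T,\,\hat{\pi}_Y \tilde{P}_Y^T),
\end{equation*}
which is exactly the claimed bound. Interpretively, the first term on the right measures the finite-time discrepancy on $[0,T]$ between the Poisson and diffusion dynamics starting from the common distribution $\hat{\pi}_X$, and the second measures the contractive propagation of two different initial distributions under the same diffusion kernel $\tilde{P}_Y^T$. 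This splitting is precisely what the coupling-based framework (KMT-type finite-time error plus contraction of $\tilde{P}_Y$) is designed to control in the subsequent sections.

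There is no real obstacle in this proposition; the only thing to be careful about is that the triangle inequality for $d_w$ really does apply, which is standard because $d_w$ is a bona fide metric on probability measures with finite first moment (and our truncated distance $d(x,y) = \min\{1,\|x-y\|\}$ is bounded, so all measures automatically have finite first moment). It is also worth stating explicitly that the choice of the intermediate measure $\hat{\pi}_X \tilde{P}_Y^T$ (rather than $\hat{\pi}_Y \tilde{P}_X^T$) is made so that the second term involves only the diffusion kernel $\tilde{P}_Y$, whose contraction rate is estimated separately via reflection and maximal couplings.
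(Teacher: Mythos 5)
Your proposal is correct and follows exactly the paper's (implicit) argument: the paper also obtains this decomposition by noting $\hat{\pi}_X\tilde{P}_X^T=\hat{\pi}_X$ and $\hat{\pi}_Y\tilde{P}_Y^T=\hat{\pi}_Y$ by invariance of the QSDs under the regenerated kernels, and then applying the triangle inequality with the intermediate measure $\hat{\pi}_X\tilde{P}_Y^T$. Nothing is missing.
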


The term $d_w(\hat{\pi}_X \tilde{P}^T_X, \hat{\pi}_X \tilde{P}^T_Y)$ is
the finite time error and the term $d_w(\hat{\pi}_X\tilde{P}^T_Y,
\hat{\pi}_Y\tilde{P}^T_Y)$ can be bounded by coupling methods.  

There are two different ways to think about the distance
$d_w(\hat{\pi}_X, \hat{\pi}_Y)$. One method is considering
$\hat{\pi}_X$ and $\hat{\pi}_Y$ as conditional distributions on set
$\mathbb{R}^{d}_{+} / \partial\mathcal{X}$,
i.e. $\hat{\pi}_X(A)=\{\hat{X}\in A|t<\tau_X\}$ and
$\hat{\pi}_Y(A)=\{\hat{Y}\in A|t<\tau_Y\}$, where $\tau_X$ and
$\tau_Y$ are the killing time for processes $\mathbf{\hat{X}}$ and $\mathbf{\hat{Y}}$,
respectively. The other way is to use the $\mathbf{\tilde{X}}$ and
$\mathbf{\tilde{Y}}$ that regenerate from QSDs. No conditioning is
needed as $\hat{\mu}_{X}$ and $\hat{\mu}_{Y}$ are now the invariant probability measures of
$\mathbf{\tilde{X}}$ and $\mathbf{\tilde{Y}}$ respectively. There are
some fundamental difficulty when computing the conditional finite time error
because it is hard to couple $\hat{X}_{n}$ and
$\hat{Y}_{n}$ when one regenerates while the other does
not. Hence we choose to use $\mathbf{\tilde{X}}$ and
$\mathbf{\tilde{Y}}$ instead. 
\subsection{Finite time error}
We consider the modified processes $\mathbf{\tilde{X}}$ and $\mathbf{\tilde{Y}}$, which are
regenerated from the corresponding QSDs when they hit the boundary. Let
$\hat{\pi}_{X}$ and $\hat{\pi}_{Y}$ be the invariant
measures of $\mathbf{\tilde{X}}$ and $\mathbf{\tilde{Y}}$. Let $\tilde{\Gamma}(
\mathrm{d}x, \mathrm{d}y) =
\hat{\pi}^2_{X}(\tilde{P}^T_{X}\times\tilde{P}^T_{Y})$,
where $\hat{\pi}^2_{X}$ is the coupled measure of
$\hat{\pi}_{X}$ on the "diagonal" of
$\mathbb{R}^d\times\mathbb{R}^d$ that is supported by the hyperplane
$\{(x,y)\in\mathbb{R}^{2d}|y=x\}$ such that
$\hat{\pi}^2_{X}(\{(x,x)|x\in A\}) = \hat{\pi}_{X}(A)$,
and $\tilde{P}^T_{X}\times\tilde{P}^T_{Y}$ is any
coupled process such that two marginal processes are
$\mathbf{\tilde{X}}$ and $\mathbf{\tilde{Y}}$ respectively. The
following proposition follows easily. 

\begin{proposition}
Let $( \tilde{X}_{n}, \tilde{Y}_{n})$ be a coupling of
$\tilde{X}_{n}$ and $\tilde{Y}_{n}$ with transition kernel
$\tilde{P}^T_X\times\tilde{P}^T_Y$, then
\begin{equation*}
    d_w(\hat{\pi}_{X}\tilde{P}^T_{X},
    \hat{\pi}_{X}\tilde{P}^T_{Y})\leq
    \mathbb{E}_{\hat{\pi}_{X}}[d(\tilde{X}_{T},
    \tilde{Y}_{T})] \,.
  \end{equation*}
\end{proposition}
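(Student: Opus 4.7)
The plan is to observe that the claimed bound is essentially a direct consequence of the definition of the 1-Wasserstein distance once we exhibit an explicit coupling of the two target measures. I would first identify the measure $\tilde{\Gamma}(\mathrm{d}x, \mathrm{d}y) = \hat{\pi}_X^2 (\tilde{P}^T_X \times \tilde{P}^T_Y)$ as a candidate coupling on $\mathbb{R}^d \times \mathbb{R}^d$, and then verify that its two marginals are $\hat{\pi}_X \tilde{P}^T_X$ and $\hat{\pi}_X \tilde{P}^T_Y$ respectively.

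For the marginal check, I would argue pointwise in the initial state: the diagonal measure $\hat{\pi}_X^2$ has the property that, for any measurable $A \subset \mathbb{R}^d$,
\begin{equation*}
\hat{\pi}_X^2(A \times \mathbb{R}^d) = \hat{\pi}_X^2(\mathbb{R}^d \times A) = \hat{\pi}_X(A),
\end{equation*}
so both components of the pair $(\tilde{X}_0, \tilde{Y}_0)$ are distributed according to $\hat{\pi}_X$ and in fact $\tilde{X}_0 = \tilde{Y}_0$ almost surely. Because $\tilde{P}^T_X \times \tilde{P}^T_Y$ is, by construction, a Markov coupling whose first marginal transition kernel is $\tilde{P}^T_X$ and whose second is $\tilde{P}^T_Y$, integrating out the second coordinate over the full space yields the pushforward $\hat{\pi}_X \tilde{P}^T_X$, and symmetrically for the second marginal. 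Hence $\tilde{\Gamma}$ is indeed an admissible coupling of $\hat{\pi}_X \tilde{P}^T_X$ and $\hat{\pi}_X \tilde{P}^T_Y$.

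With this in hand, the definition of the Wasserstein distance in \eqref{wass} gives
\begin{equation*}
d_w(\hat{\pi}_X \tilde{P}^T_X, \hat{\pi}_X \tilde{P}^T_Y) \;\leq\; \int d(x,y)\, \tilde{\Gamma}(\mathrm{d}x, \mathrm{d}y) \;=\; \mathbb{E}_{\hat{\pi}_X}\bigl[d(\tilde{X}_T, \tilde{Y}_T)\bigr],
\end{equation*}
where the expectation is over the common initial condition drawn from $\hat{\pi}_X$ and the joint law of the coupled processes up to time $T$. This is exactly the inequality to be established.

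There is no genuine obstacle here; the only minor point requiring care is the bookkeeping that both marginals are started from the \emph{same} measure $\hat{\pi}_X$ (not one from $\hat{\pi}_X$ and the other from $\hat{\pi}_Y$), which is what makes the diagonal coupling $\hat{\pi}_X^2$ meaningful and, consequently, what allows the estimate to be written in terms of $\mathbb{E}_{\hat{\pi}_X}$ alone. The real work will come later, when one must actually construct a Markov coupling $(\tilde{X}_n, \tilde{Y}_n)$ for which the right-hand side is small; but that is a separate task taken up in the subsequent sections through the $\tau$-leaping and KMT constructions.
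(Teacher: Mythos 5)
Your proposal is correct and follows the same route as the paper: both exhibit $\tilde{\Gamma} = \hat{\pi}_X^2(\tilde{P}^T_X \times \tilde{P}^T_Y)$ as an admissible coupling of $\hat{\pi}_X \tilde{P}^T_X$ and $\hat{\pi}_X \tilde{P}^T_Y$ and then invoke the definition of the Wasserstein distance as an infimum over couplings. The paper's proof is terser (it leaves the marginal verification implicit), so your explicit check of the marginals of the diagonal measure is a harmless elaboration of the same argument.
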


\begin{proof}
By the definition of Wasserstein distance
\begin{equation*}
    \begin{aligned}
    d_w(\hat{\pi}_{X}\tilde{P}^T_{X}, \hat{\pi}_{X}\tilde{P}^T_{Y}) &\leq\int_{\mathbb{R}^d\times\mathbb{R}^d} d(x,y)\hat{\pi}^2_{X}(\tilde{P}^T_{X}\times\tilde{P}^T_{Y})(dx, dy)\\
    &= \int_{\mathbb{R}^d} \mathbb{E}_{(x, x)}d( \tilde{X}_{T},
    \tilde{Y}_{T})\hat{\pi}_{X}(dx) = \mathbb{E}_{\hat{\pi}_{X}}[d(\tilde{X}_{T},
    \tilde{Y}_{T})] \,.
    \end{aligned}
\end{equation*}
\end{proof}

The key of estimating the finite time error effectively is to create a
good coupled process $( \tilde{X}_{n},
\tilde{Y}_{n})$. That is why we need to use the KMT algorithm to
generate matching Wiener process and Poisson processes. Here it
remains to define how $\tilde{X}_{n}$ and $\tilde{Y}_{n}$
couple when they regenerate from QSDs. Since we do not have QSD {\it
  in priori}, we will use $\hat{X}_{n}$ and $\hat{Y}_{n}$
to approximate $\tilde{X}_{n}$ and $\tilde{Y}_{n}$. In
other words, we regenerate samples from the temporal occupation
measure. To minimize error during sample regeneration, we define the
following coupled process $( \hat{X}_{n}, \mu^{X}_{n})$
and $( \hat{Y}_{n}, \mu^{Y}_{n})$, such that
$\hat{X}_{n}$ and $\hat{Y}_{n}$ follows equations
\eqref{possN} and \eqref{diffN} respectively by using paired processes
$B_{k}(t)$ and $P_{k}(t)$ for each $k$, and $\mu^{X}_{n}$,
$\mu^{Y}_{n}$ are two occupation measures. $S = (Z_{1}, \cdots,
Z_{N})$ ($N$ is large enough) is a finite sequence of uniform random variables on $(0, 1)$. Let $N_{X}$ and $N_{Y}$ are the total number of regenerations up to time $n$. In other words when $\tilde{X}_{n+1}$ enters $\partial
\mathcal{X}$ at step $n$ and needs regeneration, we increase $N_{X}$
by one and choose the $N_{X}$-th element of $S$, $Z_{N_{X}}$ to regenerate $\tilde{X}_{n+1}$,  by letting
$\tilde{X}_{n+1} = \tilde{X}_{\left \lfloor Z_{N_{X}}n
  \right \rfloor}$. Then it is easy to see that $( \hat{X}_{n}, \mu^{X}_{n})$
and $( \hat{Y}_{n}, \mu^{Y}_{n})$ is a Markov coupling
and the marginal processes $(\hat{X}_{n}, \hat{Y}_{n})$ is
a coupling of equations \eqref{possN} and \eqref{diffN}.

\begin{algorithm}
  \caption{Estimate finite time error}
  \begin{algorithmic}[l]
    \label{FTE_2}
    \REQUIRE Initial value $\hat{X}_0$
\ENSURE An estimator of $d_w(\hat{\pi}^T_{X}\tilde{P}^T_{X}, \hat{\pi}^T_{Y}\tilde{P}^T_{Y})$
\STATE Set initial value $\hat{X}^{1}_1 = \hat{Y}^{ 1}_1 $
\STATE Generate a sequence of uniformly distributed random variable S 
\FOR{$m = 1\ \text{to} \ M $}
\STATE Using the KMT algorithm to generate paired trajectories $\{P_k \}$ and $\{B_k\}$
\STATE If $m \neq 1$, reset initial value $\hat{X}^{m}_1 = \hat{Y}^{m}_1 = \hat{X}^{m-1}_T$
\STATE Let $N_X = N_Y = 0$
\FOR{$ n = 1\  \text{to}\  T$}
\STATE Update $\hat{X}^{m}_{n+1}$ and $\hat{Y}^{m}_{n+1}$ using equations \eqref{possN} and \eqref{diffN} respectively
\IF {$\hat{X}^{m}_{n+1} \in \partial \mathcal{X}$}
\STATE $N_X = N_X + 1$
\STATE Let $\hat{X}^{m}_{n+1} = \hat{X}^{m}_{\lfloor Z_{N_X}n\rfloor}$
\ENDIF
\IF {$\hat{Y}^{m}_{n+1} \in \partial \mathcal{X}$}
\STATE $N_Y = N_Y + 1$
\STATE Let $\hat{Y}^{m}_{n+1} = \hat{Y}^{m}_{\lfloor Z_{N_Y}n\rfloor}$
\ENDIF
\ENDFOR
\STATE Let $d(\hat{X}^{m}_T, \hat{Y}^{m}_T)=\min(1, \|\hat{X}^{m}_T - \hat{Y}^{ m}_T\|)$
\ENDFOR
\RETURN $\frac{1}{M}\sum^M_{m = 1} d(\hat{X}^{m}_T, \hat{Y}^{m}_T)$
\end{algorithmic}
\end{algorithm}

Details of computation are shown in Algorithm \ref{FTE_2}. When $N$ is
large, initial values $\hat{X}^{1}_1,\cdots,\hat{X}^{M}_1$ in Algorithm \ref{FTE_2} are from
a trajectory of the time-$T$ skeleton of $\hat{X}^T$. Hence
$\hat{X}^{1}_1, \hat{X}^{2}_1,\cdots,\hat{X}^{M}_1$ are approximately sampled from
$\hat{\pi}_X$. The error term $d(\hat{X}^{m}_T, \hat{Y}^{m}_T)$ evolved from
the initial value pair $\hat{X}^{m}_1=\hat{Y}^{m}_1=\hat{X}^{m-1}_T$ is recorded. Therefore, 
\begin{equation}
    \frac{1}{M}\sum^M_{m=1}d(\hat{X}^{m}_T, \hat{Y}^{m}_T)
\end{equation}
is an estimator of 
\begin{equation}
    \mathbb{E}_{\hat{\pi}_{X}}[d(\tilde{X}_{T},
    \tilde{Y}_{T})] \,,
\end{equation}
which is an upper bound of $d_w(\hat{\pi}^T_X\tilde{P}^T_X, \hat{\pi}^T_X\tilde{P}^T_Y)$.

\subsection{Coupling inequality and contraction rate}

Similar to the coupling inequality of the total variation norm, the distance $d$ we use in this paper also satisfies the coupling inequality. Let $(Z^{(1)}_t, Z^{(2)}_t)$ be a coupling of two stochastic processes and let $\tau_c$ be the coupling time. The following Lemma follows easily. 

\begin{proposition}
\label{coupling inequality}
For a Markov coupling $(Z^{(1)}_t, Z^{(2)}_t)$, we have
\begin{equation*}
    d_w(\mbox{law}(Z^{(1)}_T), \mbox{law}(Z^{(2)}_T ))\leq\mathbb{P}(Z^{(1)}_T\neq Z^{(2)}_T)=\mathbb{P}(\tau_c >T).
\end{equation*}
\end{proposition}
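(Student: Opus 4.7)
The plan is to unpack the two marginal laws, exhibit a specific coupling of them, and use the boundedness of the metric $d(x,y)=\min\{1,\|x-y\|\}$ together with the sticky property of Markov couplings stated earlier in the preliminary.

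First I would observe that the joint law of $(Z^{(1)}_T, Z^{(2)}_T)$ is itself a coupling of $\mathrm{law}(Z^{(1)}_T)$ and $\mathrm{law}(Z^{(2)}_T)$. Hence, applying the infimum definition of the 1-Wasserstein distance to this particular coupling yields
\begin{equation*}
   d_w(\mathrm{law}(Z^{(1)}_T), \mathrm{law}(Z^{(2)}_T)) \;\leq\; \mathbb{E}\bigl[d(Z^{(1)}_T, Z^{(2)}_T)\bigr].
\end{equation*}
Since $d(x,y)=\min\{1,\|x-y\|\}$ is bounded above by $1$ and vanishes whenever $x=y$, the pointwise bound $d(x,y)\leq \mathbf{1}_{\{x\neq y\}}$ holds. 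Taking expectations gives
\begin{equation*}
   \mathbb{E}\bigl[d(Z^{(1)}_T, Z^{(2)}_T)\bigr] \;\leq\; \mathbb{P}\bigl(Z^{(1)}_T \neq Z^{(2)}_T\bigr),
\end{equation*}
which establishes the first inequality.

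For the second equality, I would invoke the standing convention introduced right before the definition of coupling time, namely that $Z^{(1)}_{t+s}=Z^{(2)}_{t+s}$ for all $s>0$ whenever $Z^{(1)}_t=Z^{(2)}_t$. Under this assumption, the events $\{Z^{(1)}_T=Z^{(2)}_T\}$ and $\{\tau_c\leq T\}$ are equal almost surely: on $\{\tau_c\leq T\}$ the two processes have already met and stuck together by time $T$, while on $\{\tau_c>T\}$ they have not yet met, so in particular $Z^{(1)}_T\neq Z^{(2)}_T$. Taking complements gives $\mathbb{P}(Z^{(1)}_T\neq Z^{(2)}_T) = \mathbb{P}(\tau_c>T)$, which finishes the proof.

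There is no real obstacle here; the statement is the standard coupling inequality adapted to the bounded metric $d=\min\{1,\|\cdot\|\}$. The only point worth being careful about is that the inequality $d(x,y)\leq \mathbf{1}_{\{x\neq y\}}$ uses the truncation by $1$ in an essential way (for the unbounded Euclidean metric the analogous statement would fail), and that the equality $\mathbb{P}(Z^{(1)}_T\neq Z^{(2)}_T)=\mathbb{P}(\tau_c>T)$ relies on the stickiness assumption that was built into the definition of Markov coupling earlier in Section~2.
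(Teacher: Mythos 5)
Your proposal is correct and follows essentially the same route as the paper: both use the joint law of $(Z^{(1)}_T, Z^{(2)}_T)$ as an admissible coupling in the infimum defining $d_w$, and both exploit the bound $d(\xi,\eta)\leq \mathbf{1}_{\{\xi\neq\eta\}}$ (the paper phrases this as restricting the integral to $\xi\neq\eta$ and then bounding $d$ by $1$). Your explicit justification of the final equality $\mathbb{P}(Z^{(1)}_T\neq Z^{(2)}_T)=\mathbb{P}(\tau_c>T)$ via the stickiness convention is a detail the paper leaves implicit, but it is not a different argument.
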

\begin{proof}
By the definition of the Wasserstein distance,
\begin{equation*}
    \begin{aligned}
    &d_w(\mbox{law}(Z^{(1)}_T), \mbox{law}(Z^{(2)}_T )) \leq \int d(\xi, \eta)\mathbb{P}((Z^{(1)}_T, Z^{(2)}_T)\in(d\xi, d\eta))\\
    &=\int_{\xi\neq\eta}d(\xi, \eta)\mathbb{P}((Z^{(1)}_T, Z^{(2)}_T)\in(d\xi, d\eta))\\
    &\leq \int_{\xi\neq\eta}\mathbb{P}((Z^{(1)}_T, Z^{(2)}_T)\in(d\xi, d\eta))\\
    &=\mathbb{P}(Z^{(1)}_T \neq Z^{(2)}_T).
    \end{aligned}
\end{equation*}
\end{proof}

\begin{proposition}
Assume that $d_w(\pi_X, \hat{\pi}_X)$ and $d_w(\pi_Y, \hat{\pi}_Y)$ are in order $O(h)$, then the error 
\begin{equation*}
  d_w(\pi_X, \pi_Y)\leq \frac{ d_w(\hat{\pi}_X \tilde{P}^T_X, \hat{\pi}_X \tilde{P}^T_Y )}{1-\alpha} + O(h),   
\end{equation*}
where $\alpha < 1$ is the contraction rate of the transition kernel $\tilde{P}^T_Y$ and $d_w(\hat{\pi}_X \tilde{P}^T_X, \hat{\pi}_X \tilde{P}^T_Y )$ is the finite time error.
\end{proposition}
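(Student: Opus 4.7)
The plan is to chain together the triangle inequality, the invariance of the QSDs under their own transition kernels, and the assumed contraction rate of $\tilde{P}^T_Y$, exactly in the spirit of the decomposition already introduced for $d_w(\hat{\pi}_X,\hat{\pi}_Y)$. First I would apply the initial triangle inequality
\begin{equation*}
d_w(\pi_X,\pi_Y)\leq d_w(\pi_X,\hat{\pi}_X)+d_w(\hat{\pi}_X,\hat{\pi}_Y)+d_w(\hat{\pi}_Y,\pi_Y),
\end{equation*}
and absorb the first and third terms directly into the $O(h)$ error, using the two hypotheses of the proposition. This reduces the task to bounding the middle term $d_w(\hat{\pi}_X,\hat{\pi}_Y)$ by $d_w(\hat{\pi}_X\tilde{P}^T_X,\hat{\pi}_X\tilde{P}^T_Y)/(1-\alpha)$.

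Next, I would use invariance: since $\hat{\pi}_X$ and $\hat{\pi}_Y$ are invariant under $\tilde{P}^T_X$ and $\tilde{P}^T_Y$ respectively, we have $\hat{\pi}_X=\hat{\pi}_X\tilde{P}^T_X$ and $\hat{\pi}_Y=\hat{\pi}_Y\tilde{P}^T_Y$. Inserting these and applying the triangle inequality yields exactly the decomposition already stated in the paper,
\begin{equation*}
d_w(\hat{\pi}_X,\hat{\pi}_Y)\leq d_w(\hat{\pi}_X\tilde{P}^T_X,\hat{\pi}_X\tilde{P}^T_Y)+d_w(\hat{\pi}_X\tilde{P}^T_Y,\hat{\pi}_Y\tilde{P}^T_Y).
\end{equation*}
The first term on the right is the finite time error. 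For the second term, I invoke the contraction hypothesis: by definition of the contraction rate $\alpha<1$ of $\tilde{P}^T_Y$, for any two probability measures $\mu,\nu$ we have $d_w(\mu\tilde{P}^T_Y,\nu\tilde{P}^T_Y)\leq\alpha\,d_w(\mu,\nu)$. Applied with $\mu=\hat{\pi}_X$ and $\nu=\hat{\pi}_Y$ this gives $d_w(\hat{\pi}_X\tilde{P}^T_Y,\hat{\pi}_Y\tilde{P}^T_Y)\leq\alpha\,d_w(\hat{\pi}_X,\hat{\pi}_Y)$.

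Substituting back and rearranging produces $(1-\alpha)d_w(\hat{\pi}_X,\hat{\pi}_Y)\leq d_w(\hat{\pi}_X\tilde{P}^T_X,\hat{\pi}_X\tilde{P}^T_Y)$, and dividing by $1-\alpha>0$ gives the desired bound on the middle term; combining with the $O(h)$ contributions from the first and third terms of the original triangle inequality finishes the proof. There is not really a serious obstacle here: the argument is the standard geometric-series closure of a one-step perturbation estimate using contraction, and the only subtlety worth flagging is that the contraction rate $\alpha$ must be interpreted as contraction of $\tilde{P}^T_Y$ with respect to the same metric $d(x,y)=\min\{1,\|x-y\|\}$ used to define $d_w$; since the numerical experiments and coupling-time estimates in Section 3 are designed precisely to produce such an $\alpha<1$ via the coupling inequality of Proposition \ref{coupling inequality}, this compatibility is already built into the setup.
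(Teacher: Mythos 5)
Your proposal is correct and follows essentially the same route as the paper's proof: the outer triangle inequality absorbing the two $O(h)$ terms, the decomposition of $d_w(\hat{\pi}_X,\hat{\pi}_Y)$ via invariance and the triangle inequality, the contraction of $\tilde{P}^T_Y$ applied to the second term, and the rearrangement dividing by $1-\alpha$. Your explicit remark that the invariance $\hat{\pi}_X=\hat{\pi}_X\tilde{P}^T_X$, $\hat{\pi}_Y=\hat{\pi}_Y\tilde{P}^T_Y$ underlies the decomposition, and that $\alpha$ must be a contraction rate with respect to the same metric $d$, makes the argument slightly more self-contained than the paper's version but does not change its substance.
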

\begin{proof}
By the triangle inequality,
\begin{equation*}
    d_w(\pi_X, \pi_Y) \leq d_w(\pi_X, \hat{\pi}_X) + d_w(\hat{\pi}_X, \hat{\pi}_Y) + d_w(\hat{\pi}_Y, \pi_Y). 
\end{equation*}
Because both $d_w(\pi_X, \hat{\pi}_X)$ and $d_w(\pi_Y, \hat{\pi}_Y)$ are $O(h)$, we only need to estimate the second term $d_w(\hat{\pi}_X, \hat{\pi}_Y)$. By the triangle inequality again, we have
\begin{equation*}
    d_w(\hat{\pi}_X, \hat{\pi}_Y) \leq d_w(\hat{\pi}_X \tilde{P}^T_X, \hat{\pi}_X \tilde{P}^T_Y ) + d_w(\hat{\pi}_X\tilde{P}^T_Y, \hat{\pi}_Y\tilde{P}^T_Y). 
\end{equation*}
If the transition kernel $\tilde{P}^T_Y$ has enough contraction such that
\begin{equation*}
    d_w(\hat{\pi}_X\tilde{P}^T_Y, \hat{\pi}_Y\tilde{P}^T_Y)\leq\alpha d_w(\hat{\pi}_X, \hat{\pi}_Y)
\end{equation*}
for some $\alpha<1$, then we have
\begin{equation}
\label{inequality}
    d_w(\hat{\pi}_X, \hat{\pi}_Y)\leq\frac{ d_w(\hat{\pi}_X \tilde{P}^T_X, \hat{\pi}_X \tilde{P}^T_Y )}{1-\alpha}.
\end{equation}
Therefore,
\begin{equation*}
  d_w(\pi_X, \pi_Y)\leq \frac{ d_w(\hat{\pi}_X \tilde{P}^T_X, \hat{\pi}_X \tilde{P}^T_Y )}{1-\alpha} + O(h),   
\end{equation*}
\end{proof}
Therefore, in order to estimate $d_w(\pi_X, \pi_Y)$, we need to look for suitable numerical estimators of the finite time error and the speed of contraction of $\tilde{P}^T_Y$. The finite time error can be easily estimated by Algorithm \ref{FTE_2}. And the speed of contraction $\alpha$ comes from the geometric ergodicity of the Markov process $\mathbf{\tilde{Y}}$ is approximated by that of $\mathbf{\hat{Y}}$ because of the convergence result in Proposition \ref{prop1}. If our numerical estimation gives
\begin{equation*}
 d_w(\hat{\pi}_X\tilde{P}^T_Y, \hat{\pi}_Y\tilde{P}^T_Y) \approx d_w(\hat{\pi}_X\hat{P}^T_Y, \hat{\pi}_Y\hat{P}^T_Y) \leq Ce^{-\gamma T},   
\end{equation*}
then we set $\alpha = e^{-\gamma T}$. Similar as in \cite{dobson2021using}, we use the following coupling method to estimate the contraction rate $\alpha$. Let $\hat{Z}=(\hat{Y}^{(1)}, \hat{Y}^{(2)})$ be a Markov process in $\mathbb{R}^{2d}$ such that $\hat{Y}^{(1)}$ and $\hat{Y}^{(2)}$ are two copies of $\hat{Y}$. Let the first passage time to the "diagonal" hyperplane $\{(\mathbf{x},\mathbf{y})\in\mathbb{R}^{2d}|\mathbf{y=x}\}$ be the coupling time. Then by Proposition \ref{coupling inequality}
\begin{equation*}
    d_w(\hat{\pi}_X\hat{P}^T_Y, \hat{\pi}_Y\hat{P}^T_Y)\leq\mathbb{P}(\tau_c>T).
\end{equation*}
As discussed in \cite{li2020numerical}, we need a hybrid coupling scheme to make sure that two numerical trajectories couple. Under the condition that two trajectories coupled before extinction time, some coupling methods such as reflection coupling or synchronous coupling are implemented in the first phase to bring two trajectories together. Then we compare the probability density function for the next step and couple these two numerical trajectories with the maximal possible probability (called maximal coupling). After doing this for many times, we have many samples of $\tau_c$ denote by $\bm{\tau}_c$. We use the exponential tail of $\mathbb{P}(\tau_c>t)$ to estimate the contraction rate $\alpha$. We look for a constant $\gamma>0$ such that
\begin{equation*}
    -\gamma = \lim_{t\rightarrow\infty}\frac{1}{t}\log(\mathbb{P}(\tau_c>t)
\end{equation*}
if the limit exists. See Algorithm 3 for the details of implementation of coupling. Note that we cannot simply compute the contraction rate start from $t=0$ because only the tail of coupling time can be considered as exponential distributed. In addition $\hat{Y}$ is a good approximation of $\tilde{Y}$ only if $t$ is large. Our approach is to check the exponential tail in a log-linear plot. After having $\bm{\tau}_c$ , it is easy to choose
a sequence of times $t_0, t_1,\cdots, t_n$ and calculate $n_i = |\{\tau^m_c > t_i| 0\leq m \leq M\}|$ for each
$i = 0,\cdots,n$. Then $p_i = n_i/M$ is an estimator of $\mathbb{P}_{\hat{\pi}_Y}[\tau_c>t_i]$. Now let $p^u_i$ (resp. $p^l_i$) be the
upper (resp. lower) bound of the confidence interval of $p_i$ such that
\begin{equation*}
    p^u_i = \tilde{p} + z\sqrt{\frac{\tilde{p}}{\tilde{n}_i}(1-\tilde{p})}\ (resp. \ p^l_i = \tilde{p} - z\sqrt{\frac{\tilde{p}}{\tilde{n}_i}(1-\tilde{p})}),
\end{equation*}
where $z=1.96$, $\tilde{n}_i=n_i+z^2$ and $\tilde{p}=\frac{1}{\tilde{n}}(n_i+\frac{z^2}{2})$ \cite{agresti1998approximate}. If $p^l_i\leq e^{-\gamma t_i}\leq p^u_i$ for each $0\leq i\leq n$,
we say that the exponential tail starts at $t=t_{i_0}$. we
accept the exponential tail with rate $e^{-\gamma T}$ if the confidence interval $p^u_{i_0}-p^l_{i_0}$ is sufficient small. Otherwise we need to run Algorithm \ref{contraction} for longer time to eliminate the initial bias in $\tau_c$.

\begin{algorithm}
\caption{Estimation of contraction rate $\alpha$}
\begin{algorithmic}[l]
 \label{contraction}
  \REQUIRE Initial values $x,y\in \mathcal{X}/\partial{\mathcal{X}}$ \\
\ENSURE An estimation of contraction rate $\alpha$ 
\STATE Choose threshold $d>0$
\FOR{$m=1\  \text{to}\  M$}
\STATE  $\tau^m_c=0, t=0, (\hat{Y}^{(1)}_0, \hat{Y}^{(2)}_0)=(x, y)$
\STATE Flag = 0
\WHILE{Flag=0}
\IF{$\hat{Y}^{(1)}_t$ and $\hat{Y}^{(2)}_t\in\mathcal{X}/\partial{\mathcal{X}}$}
\IF{$|\hat{Y}^{(1)}_t-\hat{Y}^{(2)}_t|>d$}
\STATE Compute $(\hat{Y}^{(1)}_{t+1}, \hat{Y}^{(2)}_{t+1})$ using reflection
coupling or independent coupling
\STATE $t\leftarrow t+1$
\ELSE
\STATE Compute $(\hat{Y}^{(1)}_{t+1}, \hat{Y}^{(2)}_{t+1})$ using maximal coupling
\IF{coupled successfully}
\STATE Flag=1
\STATE $\tau^m_c=t$
\ELSE
\STATE $t\leftarrow t+1$
\ENDIF
\ENDIF
\ENDIF
\ENDWHILE
\ENDFOR
\STATE Use $\tau^1_c,\cdots,\tau^M_c$ to compute $\mathbb{P}(\tau_c>t|t<\min(\tau_{Y^{(1)}},\tau_{Y^{(2)}}))$
\STATE Fit the tail of $\log\mathbb{P}(\tau_c>t|t<\min(\tau_{Y^{(1)}},\tau_{Y^{(2)}}))$ versus $t$ by linear regression. Compute the slope $\gamma$.
\end{algorithmic}
\end{algorithm}

\section{Numerical Examples}
\subsection{SIR model}Consider an epidemic model in which the whole population is divided into three distinct classes S(susceptible), I(infected) and R(recovered), respectively. After non-dimensionalization, the ODE version of an SIR model reads
\begin{equation}
\label{x_numerical}
    \begin{aligned}
    \frac{dS}{dt} &=(\alpha - \beta S I -\mu S)\\
    \frac{dI}{dt} &=(\beta S I -(\mu+\rho+\gamma)I)\\
    \frac{dR}{dt} &=(\gamma I - \mu R)
    \end{aligned}
\end{equation}
where $\alpha$ is the birth rate, $\mu$ is 
the disease-free death rate,$\rho$ is the excess death rate for the infected class,$\gamma$ is the recover rate for the infected population,and $\beta$ is the effective contact
rate between the susceptible class and infected class \cite{dieu2016classification}. Note R completely depends on $S$ and $I$. So we just consider the evolutions of S and I. 

Now we let $V$ be the total population and consider the corresponding stochastic mass action network. There are four reactions are involved in this network. The stochastic mass action network can be defined by a Poisson process $X_n = (S_n, I_n)$.
\begin{equation}
\label{y_numerical}
    \begin{aligned}
    & \emptyset \overset{\alpha} {\Rightarrow}S, \ 
    S + I \overset{\beta}{\Rightarrow} 2I\\
    & S \overset{\mu}{\Rightarrow} \emptyset, \ 
    I \overset{\mu+\rho+\gamma}{\Rightarrow} \emptyset
    \end{aligned}
\end{equation}

Applying the numerical representation in \eqref{possN}, we have the approximate rate functions of Poisson process $\hat{X}_n$:
\begin{equation*}
\begin{aligned}
    &q_{1,n} = \sum^{n-1}_{m=0} V h \alpha, \  q_{2,n} = \sum^{n-1}_{m=0}V h \beta S_m I_m, \\ 
    &q_{3,n} =\sum^{n-1}_{m=0}V h \mu S_m, \
    q_{4,n} =\sum^{n-1}_{m=0} V h (\mu+\rho+\gamma) I_m. 
\end{aligned}
\end{equation*}

Let $P_i, i = {1, 2, 3, 4}$ be independent unit rate Poisson processes. Then $\hat{X}_n$ is driven by the discrete approximation of $\{ P_i \}_{i = 1}^4$. The rule of update of the numerical approximation $\hat{X}_n$ follows 
\begin{equation}
\label{SIR_X}
    \hat{X}_{n+1} =\begin{pmatrix}
    S_{n+1}\\
    I_{n+1}
    \end{pmatrix} 
    = \begin{pmatrix}
    S_n\\
    I_n
    \end{pmatrix} + \frac{1}{V}\begin{pmatrix}
   {\bm f}_1(P_1, \cdots, P_4, q_{1,n}, \cdots , q_{4,n})\\
    {\bm f}_2(P_1, \cdots, P_4, q_{1,n}, \cdots, q_{4,n}) 
    \end{pmatrix},
\end{equation}
where ${\bm f}_1$ and ${\bm f}_2$ comes from discrete approximation in equation \eqref{possN}. To improve the readability of the present paper, we move detailed expressions of ${\bm f}_1$ and ${\bm f}_2$ to the appendix. 

As described in Section 2.1, each Poisson processes $P_i, i = 1, 2, 3, 4$ is path-wisely approximated by a Wiener process $B_i, i = 1, 2, 3, 4$. Further, the discrete approximation $\hat{X}_n$ is pathwisely approximated by a Euler-Maruyama scheme $\hat{Y}_n$ reads
\begin{equation}
\label{SIR_Y1}
    \hat{Y}_{n+1} =\begin{pmatrix}
    S_{n+1}\\
    I_{n+1}
    \end{pmatrix} = \begin{pmatrix}
    S_n\\
    I_n
    \end{pmatrix} + \frac{1}{V}\begin{pmatrix}
    {\bm g}_1 (q_{1,n}, \cdots , q_{4,n})\\
    {\bm g}_2 (q_{1,n}, \cdots , q_{4,n})
    \end{pmatrix} 
     + \frac{1}{V}\begin{pmatrix}
    {\bm \sigma}_1(  B_1, \cdots, B_4, q_{1,n}, \cdots , q_{4,n})\\
    {\bm \sigma}_2(B_1, \cdots, B_4, q_{1,n}, \cdots , q_{4,n})
    \end{pmatrix},
\end{equation}
where functions ${\bm g}_1, {\bm g}_2, {\bm \sigma }_1$, and ${\bm \sigma}_2$ follows the expression in equation \eqref{diffN}. We refer the appendix for the detailed form of these functions. 

By the stationary increments property of standard Wiener process, we know that every finite difference of $B_i$ is normally distributed. In addition Wiener processes $B_i, i = 1, 2, 3, 4$ are independent. Therefore, equation \eqref{SIR_Y1} can be simplified to: 
\begin{equation}
\label{SIR_diff_matrix}
    \hat{Y}_{n+1} =\begin{pmatrix}
    S_{n+1}\\
    I_{n+1}
    \end{pmatrix} = \begin{pmatrix}
    S_n\\
    I_n
    \end{pmatrix} + \frac{1}{V}\begin{pmatrix}
    {\bm g}_1 (q_{1,n}, \cdots , q_{4,n})\\
    {\bm g}_2 (q_{1,n}, \cdots , q_{4,n})
    \end{pmatrix} 
    + \frac{1}{V} M \begin{pmatrix}
     W_1 \\ W_2 \\ W_3 \\ W_4
    \end{pmatrix}
\end{equation}
where $W_i, i = 1, \cdots, 4$ are independent standard normal random variables, and $M$ is a matrix that depends only on $S_n$ and $I_n$. We refer readers to the appendix for the full expression of $M$. 

In order to estimate the distance between two QSDs, we need to find the contraction rate $\alpha$ for diffusion process $\hat{Y}$ above. However, the diffusion matrix $M$ in $\hat{Y}$ is not square, which makes a reflection coupling  difficult. Here we define an equivalent diffusion process that is driven by a 2D Wiener process but has the same law as $\hat{Y}$. In our simulation, we compute the 2 by 2 covariance matrix $N=MM^T$, and set the square root of $N$ to be the new diffusion matrix. Then $\hat{Y}$ can be re-written as
    \begin{equation}
    \begin{aligned}
    \label{SIR_Y2}
    \hat{Y}_{n+1} &=\begin{pmatrix}
    S_n\\
    I_n
    \end{pmatrix} + \frac{1}{V}\begin{pmatrix}
    {\bm g}_1 (q_{1,n}, \cdots , q_{4,n})\\
    {\bm g}_2 (q_{1,n}, \cdots , q_{4,n})
    \end{pmatrix} \\ &+ \frac{1}{\sqrt{\operatorname{tr}(N)+2\sqrt{\operatorname{det}(N)}}}(N+\operatorname{det}(N) Id)\begin{pmatrix}
     W_1\\
     W_2
    \end{pmatrix},
    \end{aligned}
    \end{equation}
where $\operatorname{tr}(N)$ is the trace of $N$ and $\operatorname{det}(N)$ is the determinant of $N$, and $Id$ is the identity matrix. It is easy to see that the diffusion process $\hat{Y}$ in equations \eqref{SIR_diff_matrix} and \eqref{SIR_Y2} are equivalent. Hence we do not change its notation here. The modification of $\hat{Y}$ allows us to run Algorithm \ref{contraction} to compute the coupling time distribution. 

It remains to compute the finite time error. Let $\partial\mathcal{X}$ be the union of x-axis and y-axis. The model parameters are set as $\alpha = 7, \beta = 3, \mu = 1, \rho = 1, \gamma=2$. Processes $\hat{X}$ and $\hat{Y}$ admit QSDs $\hat{\pi}_X$ and $\hat{\pi}_Y$, respectively.
Long trajectories $P(i\Delta)$ and $B(i\Delta)$ for $i=\{1,\cdots, 2^{20}\}$ and $\Delta=0.01$ are constructed when we consider the trajectory-by-trajectory behaviour of two processes. The time step size is $h=0.001$ and the fixed time is set as $T=0.5$.

The result for $V=1000$ is demonstrated in Figure 1. Left bottom of Figure 1 shows the QSD of diffusion process $\hat{Y}$. The QSD of the Poisson process is shown on right top of Figure 1. The difference of these two QSDs is shown at the bottom of Figure 1. We can see that the total variation distance between two QSDs is $0.0901$, which is considered to be small. This is reasonable because with high probability, the trajectories of both Poisson process and the diffusion process moves far away from the absorbing set $\partial\mathcal{X}$. 

The total variation distance between two QSDs is consistent with the prediction developed in this paper. We first use Algorithm 3 to compute the distribution of the coupling time, which is shown in Figure 1 Top Left. Then we use Algorithm 2 to compute the finite time error. The finite time error is 0.0026 for $V = 1000$. As a result, the upper bound given in equation \eqref{inequality} is 0.0054 for $V = 1000$, which is smaller than the empirical total variation error 0.0901 in this case.

Then we carry out similar computations for $V=10$ on a course mesh. The result is shown in Figure 2. To compare with the case for $V=1000$ on the same mesh, we re-scaled the probability density function obtained from the Monte-Carlo simulation. The probability density in one bin in the coarse mesh is evenly distributed into many bins in the refined mesh. The difference between two QSDs are shown at the bottom of Figure 2. It is not hard to see the total variation distance becomes significantly larger when the volume gets smaller. Same as above, we use Algorithm 3 to compute the distribution of the coupling time distribution ( Figure 2 Top Left) and use Algorithm 2 to compute the finite time error. The finite time error is 0.1748 for $V=10$. As a result, the upper bound given in \eqref{inequality} is 0.3639 for $V=10$. This is consistent with the numerical finding shown in Figure 2 Bottom Right.

\begin{figure}[h]
 \centering
     \includegraphics[width=\textwidth]{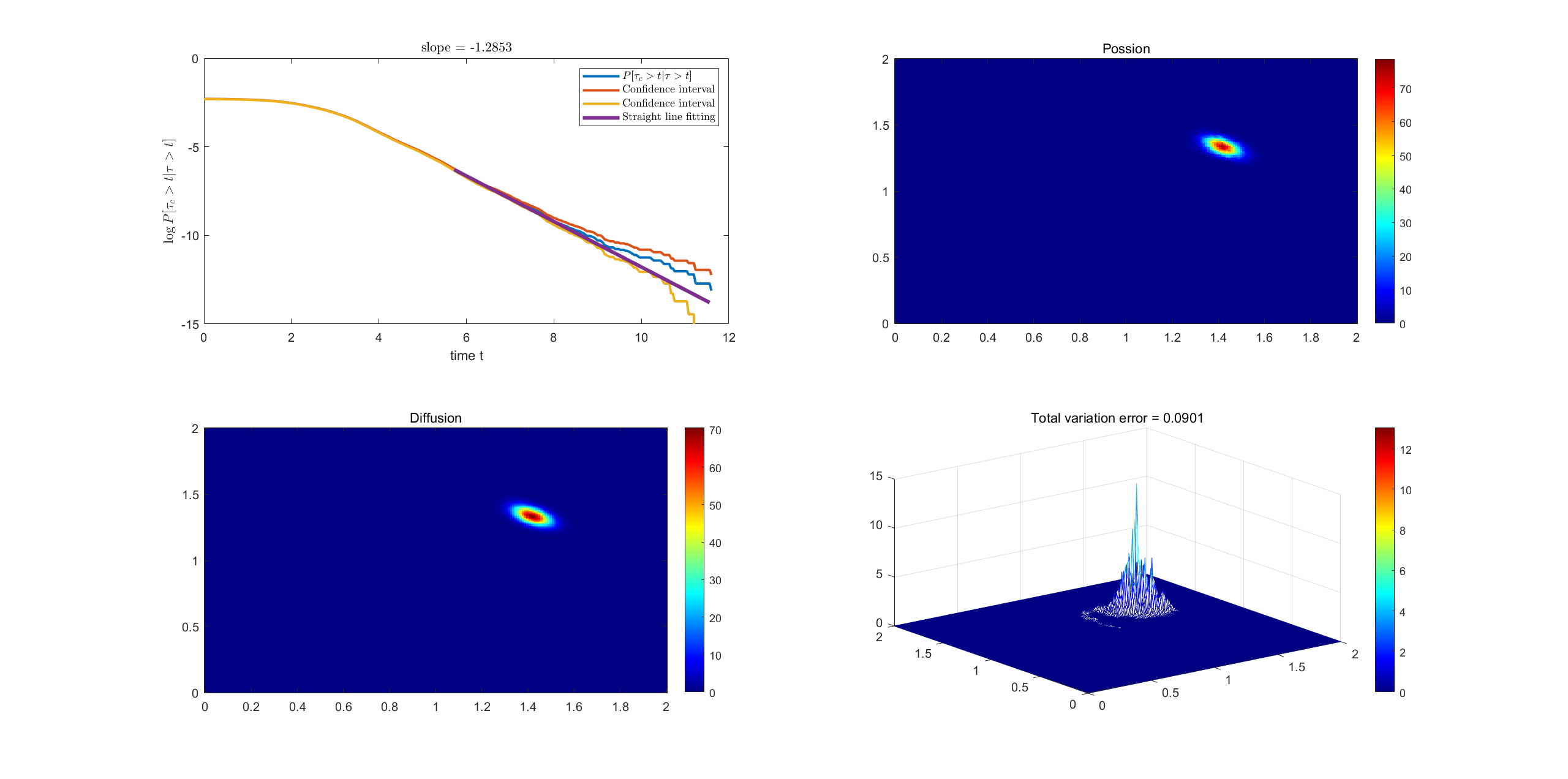}
     \caption{(Case $V = 1000$) $\textbf{Upper panel}$: ($\textbf{Left}$) $\mathbb{P}(\tau_c>t|\tau<t)$ vs.$t$. ($\textbf{Right}$) QSD of Poisson process.\ $\textbf{Lower panel}$: ($\textbf{Left}$) QSD of diffusion process. ($\textbf{Right}$) Total variation of two QSDs.}     
    \label{F1}
 \end{figure}
 
\begin{figure}[h]
 \centering
     \includegraphics[width=\textwidth]{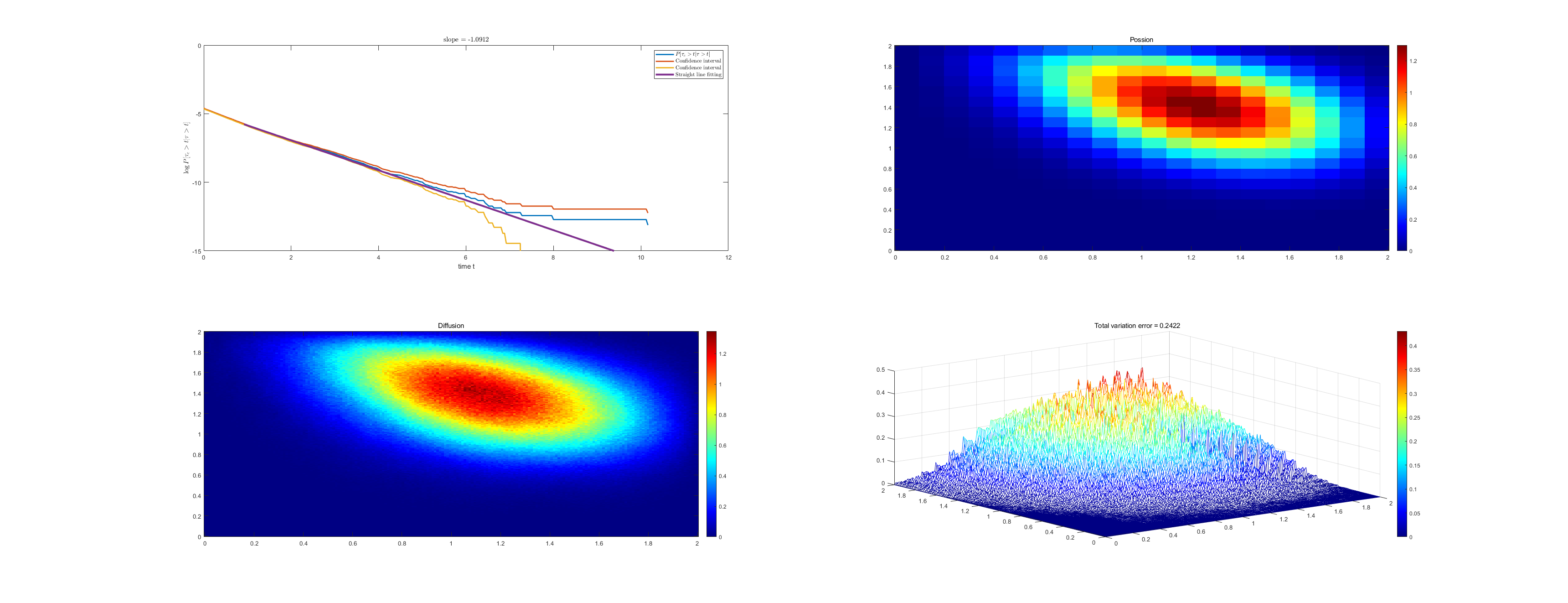}
     \caption{(Case $V = 10$) $\textbf{Upper panel}$: ($\textbf{Left}$) $\mathbb{P}(\tau_c>t|\tau<t)$ vs.$t$. ($\textbf{Right}$) QSD of Poisson process.\ $\textbf{Lower panel}$: ($\textbf{Left}$) QSD of diffusion process. ($\textbf{Right}$) Total variation of two QSDs.}     
    \label{F2}
 \end{figure} 

As we consider the effect of the capacity volume, the finite time error and the contraction rate for different volumes are compared in Table \ref{t1}. The last column $d_w(\hat{\pi}_X, \hat{\pi}_Y)$ is computed using \eqref{inequality}. Being consistent with Theorem \ref{distance}, the 1-Wasserstein distance between two QSDs is smaller as $V$ getting larger. 

\begin{center}
\begin{table}
\begin{tabular}{|c|c| c| c|c| }
\hline
 volume $V$ &finite time error & contraction rate $\gamma$ & $d_w(\hat{\pi}_X, \hat{\pi}_Y)$ \\
 \hline
 1000 &0.0026 & 1.2853 & 0.0054 \\  
 400 & 0.0079 & 1.2418 & 0.0170\\
 100 &0.0279 & 1.1613  & 0.0634\\
 10  &0.1748 & 1.0912 & 0.3639\\
 \hline
\end{tabular} 
\captionsetup{justification=centering}
\caption{SIR model. Numerical results for different volumes}
\label{t1}
\end{table}
\end{center}

\subsection{Oregonator system}
In this example, we consider a well known example of chemical oscillator called the  Belousov-Zhabotinsky (BZ) reaction model or "Oregonator"\cite{brons1991canard, epstein1998introduction, gray1990chemical}. The ODE version of an Oregnator system is given by
\begin{equation*}
    \begin{aligned}
    \frac{dS_1}{dt} &= S_1S_2 - C_2S_1S_2 + C_3S_1-2C_4S_1^2\\
    \frac{dS_2}{dt} &=-C_1S_2-C_2S_1S_2+C_5hS_3\\
    \frac{dS_3}{dt} &=2C_3S_1-C_5S_3.
    \end{aligned}
\end{equation*}
We refer Figure 3 Top Left for a sample trajectories of the Oregonator on $\mathbb{R}^3_{+}$. The parameter values are chosen as $C_1=2560, C_2=800000, C_3=16000, C_4=2000, C_5=9000, \delta=0.4$.

Let $V$ be the volume. Six reactions in this process are shown as following.
\begin{equation*}
\begin{aligned}
 & S_2 \overset{C_1}{\Rightarrow} S_1, \ 
 S_1+S_2 \overset{C_2}{\Rightarrow} \emptyset, \ 
 S_1 \overset{C_3}{\Rightarrow}2S_1+2S_3\\
 & 2S_1\overset{C_4}{\Rightarrow} \emptyset,\ 
 S_3\overset{C_5\delta}{\Rightarrow}S_2, \ 
 S_3\overset{C_5(1-\delta)}{\Rightarrow} \emptyset
\end{aligned}
\end{equation*}
Applying the numerical representation in \eqref{possN}, we have the approximate rate functions
of Poisson process $\hat{X}_n = (S_{1,n}, S_{2,n}, S_{3,n})$:
\begin{equation*}
\begin{aligned}
    & q_{1,n}=
    \sum^{n-1}_{m=0}V h C_1 S_{2,m},\ q_{2,n}=\sum^{n-1}_{m=0}V h C_2 S_{1,m}S_{2,m}, \ q_{3,n}=\sum^{n-1}_{m=0}V h C_3 S_{1,m}, \\
    & q_{4,n}=\sum^{n-1}_{m=0}V h C_4S_{1,m}^2, \ q_{5,n}=\sum^{n-1}_{m=0} V h C_5 \delta S_{3,m}, \ q_{6,n}=\sum^{n-1}_{m=0} V h C_5 (1-\delta) S_{3,m}.
\end{aligned}
\end{equation*}
We remark terms $S_{1,m}$ is the numerical value of species $S_1$ at time step $m$, and cases of other terms are analogous.
Hence the Poisson process $\hat{X}$ of the Oregonator model can be written as
\begin{equation*}
 \hat{X}_{n+1} =\begin{pmatrix}
    S_{1,n+1}\\
    S_{2, n+1}\\
    S_{3, n+1}
    \end{pmatrix} =\begin{pmatrix}
    S_{1, n}\\
    S_{2, n}\\
    S_{3, n}
    \end{pmatrix}+ \frac{1}{V}\begin{pmatrix}
    \bm{f_1}(P_1,\cdots, P_6, q_{1, n},\cdots, q_{6,n})\\
    \bm{f_2}(P_1,\cdots, P_6, q_{1, n},\cdots, q_{6,n})\\
    \bm{f_3}(P_1,\cdots, P_6, q_{1, n},\cdots, q_{6,n})
    \end{pmatrix},
\end{equation*}
where $P_i, i=\{1,\cdots, 6\}$ are independent unite rate Poisson processes. $\bm{f_1}$, $\bm{f_2}$ and $\bm{f_3}$ comes from discrete approximation in equation $\eqref{possN}$. To improve the
readability of the present paper, we move detailed expressions of $\bm{f_1}$, $\bm{f_2}$ and $\bm{f_3}$ to the appendix.

The diffusion approximation $\hat{Y}$ can be written as
\begin{equation}
\label{OREG_Y1}
  \hat{Y}_{n+1} = \begin{pmatrix}
    S_{1,n+1}\\
    S_{2,n+1}\\
    S_{3,n+1}
    \end{pmatrix} \\
    =\begin{pmatrix}
    S_{1,n}\\
    S_{2,n}\\
    S_{3,n}
    \end{pmatrix} + \frac{1}{V}\begin{pmatrix}
     \bm{g_1}(q_{1,n}, \cdots, q_{6,n})\\
     \bm{g_2}(q_{1, n}, \cdots, q_{6, n})\\
     \bm{g_3}(q_{1, n}, \cdots, q_{6, n})
    \end{pmatrix} +
   \frac{1}{V}\begin{pmatrix} \bm{\sigma_1}(B_1,\cdots, B_6, q_{1, n}, \cdots, q_{6, n})\\
    \bm{\sigma_2}(B_1,\cdots, B_6, q_{1, n}, \cdots, q_{6, n})\\
     \bm{\sigma_3}(B_1,\cdots, B_6, q_{1, n}, \cdots, q_{6, n})
   \end{pmatrix} 
\end{equation}
where $B_i, i= \{1, \cdots, 6\}$ are independent standard Wiener processes, functions $\bm{g_1}$, $\bm{g_2}$, $\bm{g_3}$, $\bm{\sigma_1}$, $\bm{\sigma_2}$ and $\bm{\sigma_3}$ follows the expression in equation $\eqref{diffN}$. We refer the
appendix for the detailed form of these functions.

By the stationary increments property and independence of Wiener processes $B_i, i= \{1, \cdots, 6\}$, equation \eqref{OREG_Y1} can be simplified to: 
\begin{equation}
\label{OREG_diff_matrix}
\begin{aligned}
    \hat{Y}_{n+1} &=\begin{pmatrix}
    S_{1,n+1}\\
    S_{2,n+1}\\
    S_{3,n+1}
    \end{pmatrix} = \begin{pmatrix}
    S_{1,n}\\
    S_{2,n}\\
    S_{3,n}
    \end{pmatrix} + \frac{1}{V}\begin{pmatrix}
   \bm{g_1}(q_{1,n}, \cdots, q_{6,n})\\
     \bm{g_2}(q_{1, n}, \cdots, q_{6, n})\\
     \bm{g_3}(q_{1, n}, \cdots, q_{6, n})
    \end{pmatrix} 
    + \frac{1}{V} M \begin{pmatrix}
     W_1 \\ W_2 \\ W_3 \\ W_4 \\W_5 \\ W_6
    \end{pmatrix}
\end{aligned}
\end{equation}
where $W_i, i = 1, \cdots, 6$ are independent standard normal random variables, and $M$ is a matrix that depends only on $S_n$ and $I_n$. We refer readers to the appendix for the full expression of $M$. 

Let $\partial\mathcal{X}$ be union of x-axis, y-axis and z-axis. Processes $\hat{X}$ and $\hat{Y}$ admit QSDs $\hat{\pi}_X$ and $\hat{\pi}_Y$, respectively.
Long trajectories $P(i\Delta)$ and $B(i\Delta)$ for $i=\{1,\cdots,
2^{29}\}$ and $\Delta=0.001$ are constructed when we consider the
trajectory-by-trajectory behaviour of two processes. The time step
size is $h=10^{-8}$ and the fixed time is set as $T=2\times 10^{-4}$
when $V = 1000$, $T = 4\times 10^{-5}$ when $V = 400$, $T = 1\times
10^{-5}$ when $V = 100$, and $T = 2\times 10^{-6}$ when $V = 10$. Note
that large rate coefficients $C_i$ make the numerical results easily to beyond the length of long trajectory $B(i\Delta)$, so we pick small time step size $h$ and the fixed finite time $T$.

Figure 3 Top Left shows the solution of the ordinary differential equation. For any initial point, the trajectory eventually converges to the limit cycle. In terms of thermodynamics, the oscillation is induced through dissipation of energy and is often called a self-sustained oscillator\cite{miyazaki2013pattern}. The trajectories of Poisson process and the diffusion process up to fixed time $T$ are shown on the Top Right and Bottom Left. It looks that the trajectories are close and this is reasonable because with high probability, the trajectories of both Poisson process and the diffusion process moves far away from the absorbing set $\partial\mathcal{X}$. There are only a few regeneration events (the lines crossing the limit cycle). We compute the distribution of the coupling time. The coupling time distribution and its exponential tail are shown in Figure 3 Top Left. Then we use Algorithm \ref{FTE_2} to compute the finite time error. The finite time error is 0.0057 for $V = 1000$. As a result, the upper bound given in \eqref{inequality} is 0.0116 for $V = 1000$. For $V=10$, the finite time error is 0.4531 and the upper bound given in \eqref{inequality} is 0.4531. 

To compare the different situations for volume $V=1000$ and $V=10$, we plot the trajectories for both processes for each species. Trajectories for $V=1000$ is shown in the upper row of Figure 4 and lower row shows the case for $V=10$. It is not hard to see the Poisson process is quite close to the diffusion process when $V=1000$. But when the volume is too small, not much Poisson jumps can be observes in the Poisson process, while significant noise can be seen in the diffusion approximation. As a result, the finite time error for $V=10$ is 0.0563, which is around ten times larger than that for $V=1000$. Same as above, we compute the contraction rate $\gamma$ of the coupling time distribution to be $2.0927 \times 10^5$. This is due to the large magnitude of noise in the diffusion approximation. As a result, the upper bound given in \eqref{inequality} is 0.4531 for $V=10$. We conclude that the diffusion approximation does not approximate the QSD well when the volume is not large enough. 

\begin{figure}[h]
 \centering
     \includegraphics[width=\textwidth]{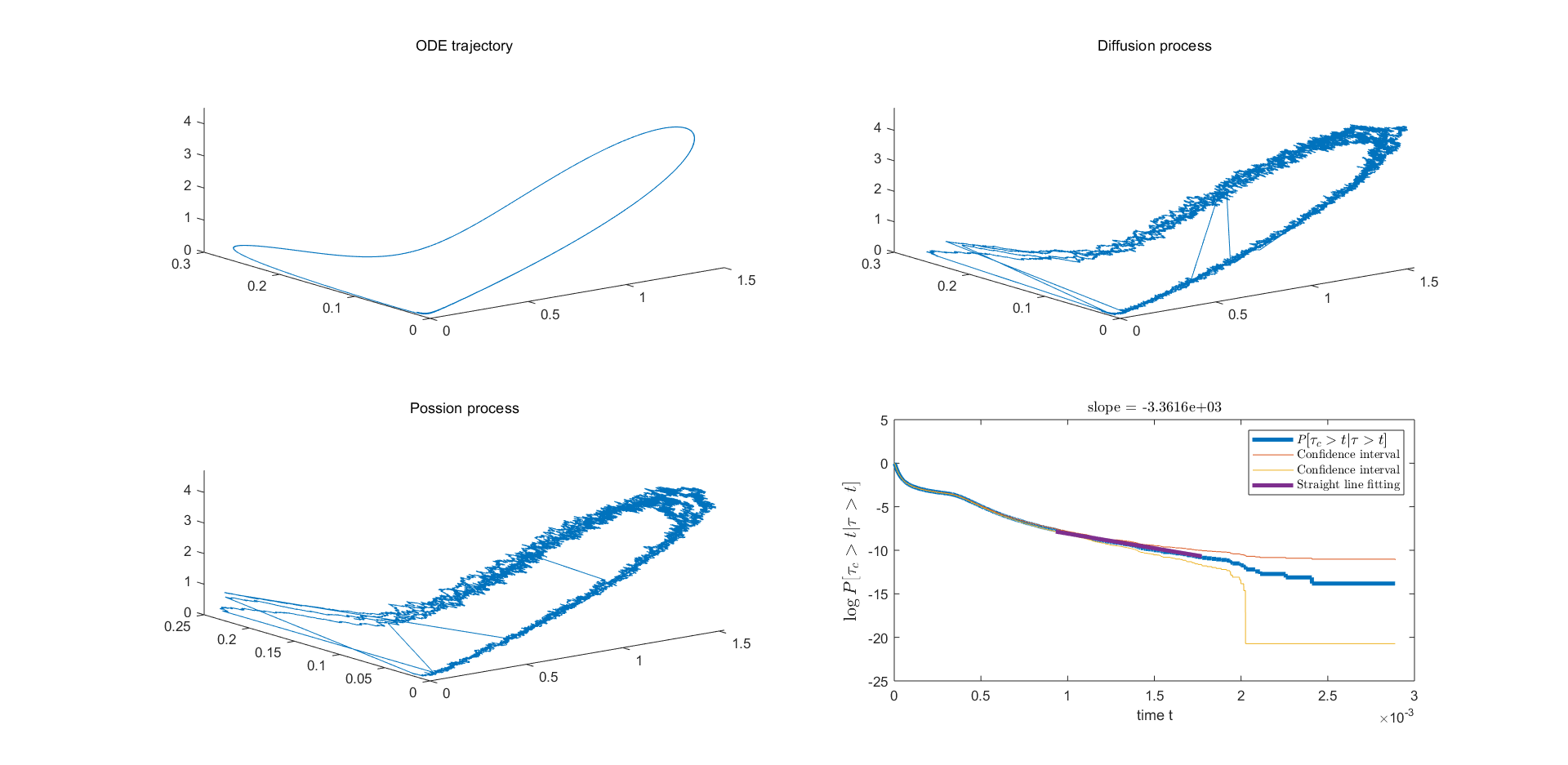}
     \caption{(Case $V = 1000$) $\textbf{Upper panel}$: ($\textbf{Left}$) ODE trajectories. ($\textbf{Right}$) Trajectories of Poisson process.\ $\textbf{Lower panel}$: ($\textbf{Left}$) Trajectories of diffusion process. ($\textbf{Right}$) $\mathbb{P}(\tau_c>t|\tau<t)$ vs.$t$.}     
    \label{F1}
 \end{figure}
 
\begin{figure}[h]
 \centering
     \includegraphics[width=\textwidth]{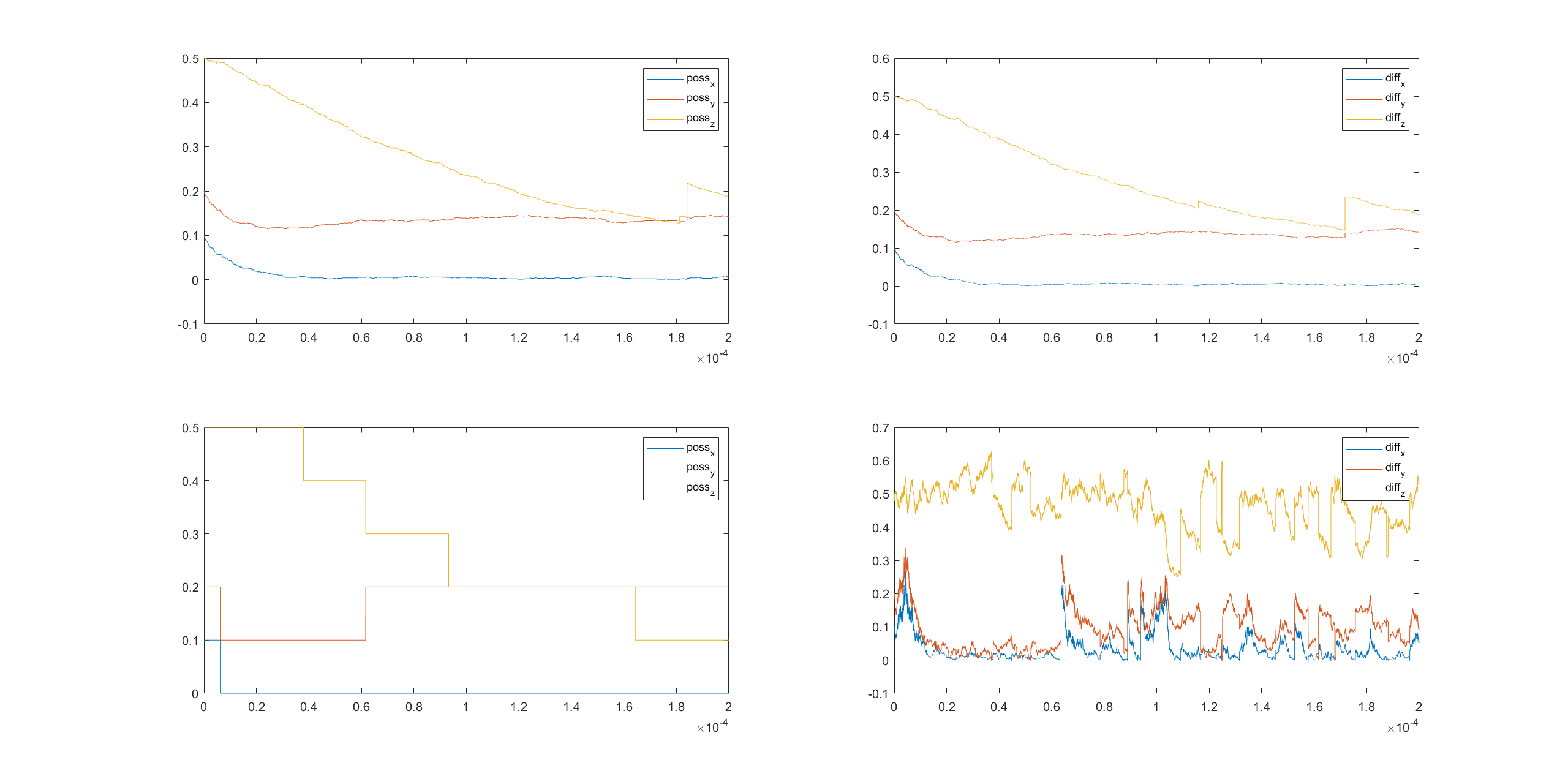}
     \caption{($V=1000$ vs. $V=10$) $\textbf{Upper panel}$: ($\textbf{Left}$) Trajectories of Poisson process for $V=1000$. ($\textbf{Right}$)Trajectories of  diffusion process for $V=1000$ .\ $\textbf{Lower panel}$: ($\textbf{Left}$) Trajectories of Poisson process for $V=10$. ($\textbf{Right}$) Trajectories of diffusion process for $V=10$.}     
    \label{F2}
 \end{figure} 

As we consider the effect of the capacity volume, the finite time error and the contraction rate for different volumes are compared in Table \ref{t1}. The last column $d_w(\hat{\pi}_X, \hat{\pi}_Y)$ is computed via \eqref{inequality}. It is not hard to see that upper bound of $d_w(\hat{\pi}_X, \hat{\pi}_Y)$ is quite larger when $V=10$. This is consistent with Theorem \ref{distance}, the supreme distance between two processes will be smaller as $V$ is getting larger. 

\begin{center}
\begin{table}
\begin{tabular}{| c |c| c| c|c| }
\hline
 volume $V$ & cut-off time $T$&finite time error & contraction rate $\gamma$ & $d_w(\hat{\pi}_X, \hat{\pi}_Y)$ \\
 \hline
 1000 & $2\times 10^{-4}$&0.0057 & 3.3616*$10^3$ & 0.0116 \\  
 400 & $4\times 10^{-5}$& 0.0088 & 2.0599*$10^4$ & 0.0157\\
 100 & $1\times 10^{-5}$ & 0.0099 & 6.0150*$10^4$  & 0.0195\\
 10 & $2\times 10^{-6}$ &0.0563 & 2.0927*$10^5$ & 0.1646\\
 \hline
\end{tabular}
\caption{Oregonator model: Numerical results for different volumes}
\label{t1}
\end{table}
\end{center}

\subsection{4D Lotka-Volterra Competitive Dynamics} Originally derived by Volterra in 1926 to describe the interaction between a predator species and a prey species \cite{lotka1926elements} and independently by Lotka to describe a chemical reaction \cite{volterra1926variazioni}, the general Lotka-Volterra model is widely used in ecology, biology, chemistry, physics, etc \cite{murray2001mathematical}. In this example we consider here a chaotic system in which 4 species with whole population $V$ compete for a finite set of resources. The ODE version of the system reads
\begin{equation*}
    \frac{d S_i}{d t} = r_i S_i (1 - \sum^4_{j=1}a_{i j} S_j), \ i = 1, 2, 3, 4.
\end{equation*}
Here $r_i$ represents the growth rate of species $i$ and $a_{i j}$ represents the extent to which species $j$ competes for resources used by species $i$. The parameter values are
\begin{equation*}
    r = \left(r_i\right)_{i=1}^4 = \begin{pmatrix}
    1\\
    0.72\\
    1.53\\
    1.27
    \end{pmatrix}, \ A = \left(a_{i j}\right)_{i,j=1}^4=\begin{pmatrix}
    1 & 1.09 & 1.52 & 0\\
    0 & 1 & 0.44 & 1.36\\
    2.33 & 0 & 1 & 0.47\\
    1.21 & 0.51 & 0.35 & 1
    \end{pmatrix}
\end{equation*}

For $i=1,\cdots,4$, all reactions in this system are shown as follows. 
\begin{equation*}
    S_i\overset{r_i}{\Rightarrow} 2S_i,\ S_1+S_i\overset{a_{i 1}r_i}{\Rightarrow}S_1,\ 
    S_2 + S_i \overset{a_{i 2}r_i}{\Rightarrow}  S_2, \ 
    S_3+S_i\overset{a_{i 3}r_i}{\Rightarrow} S_3, \ 
    S_4+S_i\overset{a_{i 4}r_i}{\Rightarrow}S_4
\end{equation*}

The corresponding rate functions are 
\begin{equation*}
\begin{aligned}
 q^n_{i,1} &= \sum^{n-1}_{m=0}V h r_i S_{i,m} \\
 q^n_{i,2} &= \sum^{n-1}_{m=0}V h r_i a_{i 1}S_{1,m}S_{i,m} \\
 q^n_{i,3} &= \sum^{n-1}_{m=0}V h r_i a_{i 2}S_{2,m}S_{i,m} \\
 q^n_{i,4} &= \sum^{n-1}_{m=0}V h r_i a_{i 3}S_{3,m}S_{i,m} \\
 q^n_{i,5} &= \sum^{n-1}_{m=0}V h r_i a_{i 4}S_{4,m}S_{i,m} 
\end{aligned}
\end{equation*}

As three zeros appear in coefficient matrix A, this system actually include 17 reactions. Therefore, the Poisson process $\hat{X}_n = (S_{1,n}, S_{2,n}, S_{3,n}, S_{4,n})$ can be written as
\begin{equation*}
\hat{X}_{n+1} =\begin{pmatrix}
    S_{1, n+1}\\
    S_{2, n+1}\\
    S_{3, n+1}\\
    S_{4, n+1}
    \end{pmatrix} =\begin{pmatrix}
    S_{1,n}\\
    S_{2,n}\\
    S_{3,n}\\
    S_{4,n}
    \end{pmatrix}\\
    +\frac{1}{V} \begin{pmatrix}
     \bm{f_1}(P_1, \cdots, P_{17}, q^n_{i, 1}, \cdots, q^{n}_{i, 5})\\
     \bm{f_2}(P_1, \cdots, P_{17}, q^n_{i, 1}, \cdots, q^{n}_{i, 5} )\\
      \bm{f_3}(P_1, \cdots, P_{17}, q^n_{i, 1}, \cdots, q^{n}_{i, 5})\\
      \bm{f_4}(P_1, \cdots, P_{17}, q^n_{i, 1}, \cdots, q^{n}_{i, 5})
    \end{pmatrix}, 
\end{equation*}
where $i=1, \cdots, 4$, $P_j,\  j=\{1, \cdots, 17\}$ are independent unit rate Poisson processes, $\bm{f_1}$, $\bm{f_2}$, $\bm{f_3}$ and $\bm{f_4}$ comes from discrete approximation in equation $\eqref{possN}$. To improve the
readability of the present paper, we move detailed expressions of $\bm{f_1}$ to $\bm{f_4}$ to the appendix.

The diffusion  approximation $\hat{Y}$ can be written as
\begin{equation}
\label{4DLV_Y1}
    \hat{Y}_{n+1} =\begin{pmatrix}
    S_{1, n+1}\\
    S_{2, n+1}\\
    S_{3, n+1}\\
    S_{4, n+1}
    \end{pmatrix} =\begin{pmatrix}
    S_{1,n}\\
    S_{2,n}\\
    S_{3,n}\\
    S_{4,n}
    \end{pmatrix}\\
    +\frac{1}{V} \begin{pmatrix}
     \bm{g_1}(q^n_{i, 1}, \cdots, q^{n}_{i, 5})\\
     \bm{g_2}(q^n_{i, 1}, \cdots, q^{n}_{i, 5} )\\
      \bm{g_3}(q^n_{i, 1}, \cdots, q^{n}_{i, 5})\\
      \bm{g_4}(q^n_{i, 1}, \cdots, q^{n}_{i, 5})
    \end{pmatrix}
    + \frac{1}{V} \begin{pmatrix}
     \bm{\sigma_1}(B_1, \cdots, B_{17}, q^n_{i, 1}, \cdots, q^{n}_{i, 5})\\
     \bm{\sigma_2}(B_1, \cdots, B_{17}, q^n_{i, 1}, \cdots, q^{n}_{i, 5} )\\
      \bm{\sigma_3}(B_1, \cdots, B_{17}, q^n_{i, 1}, \cdots, q^{n}_{i, 5})\\
      \bm{\sigma_4}(B_1, \cdots, B_{17}, q^n_{i, 1}, \cdots, q^{n}_{i, 5})
    \end{pmatrix},
\end{equation}
where $i=1, \cdots, 4$, $B_j, \ j=\{1, \cdots, 17\}$ are independent standard Wiener process, functions $\bm{g_1}$, $\bm{g_2}$, $\bm{g_3}$, $\bm{g_4}$, $\bm{\sigma_1}$, $\bm{\sigma_2}$, $\bm{\sigma_3}$ and $\bm{\sigma_4}$ follows the expression in equation $\eqref{diffN}$. We refer the
appendix for the detailed form of these functions.

By the stationary increments property and independence of Wiener processes $B_i, i= \{1, \cdots, 6\}$, equation \eqref{4DLV_Y1} can be simplified to: 
\begin{equation}
\label{4DLV_diff_matrix}
\begin{aligned}
    \hat{Y}_{n+1} &=\begin{pmatrix}
    S_{1, n+1}\\
    S_{2, n+1}\\
    S_{3, n+1}\\
    S_{4, n+1}
    \end{pmatrix} = \begin{pmatrix}
    S_{1,n}\\
    S_{2,n}\\
    S_{3,n}\\
    S_{4,n}
    \end{pmatrix} + \frac{1}{V}\begin{pmatrix}
    {\bm g}_1 (q_1, \cdots, q_4)\\
    {\bm g}_2 (q_1, \cdots, q_4)
    \end{pmatrix} 
    + \frac{1}{V} M \begin{pmatrix}
     W_1 \\ W_2 \\ \vdots \\W_{16} \\ W_{17}
    \end{pmatrix}
\end{aligned}
\end{equation}
where $W_i, i = 1, \cdots, 17$ are independent standard normal random variables, and $M$ is a matrix that depends only on $S_n$ and $I_n$. We refer readers to the appendix for the full expression of $M$.

Let $\partial\mathcal{X}$ be union of 4 axes. Processes $\hat{X}$ and $\hat{Y}$ admit QSDs $\hat{\pi}_X$ and $\hat{\pi}_Y$, respectively.
Long trajectories $P(i\Delta)$ and $B(i\Delta)$ for $i=\{1,\cdots, 2^{22}\}$ and $\Delta=0.01$ are constructed when we consider the trajectory-by-trajectory behaviour of two processes. The time step size is $h=0.001$ and the fixed time is set as $T=1$. 

Figure 5 Top Left shows the solution of the ordinary differential equation projected onto $x_1x_2x_3$ space. The trajectories of Poisson process and the diffusion process are shown on the Top Right and Bottom Left. It looks that the trajectories are close and this is reasonable because with high probability, the trajectories of both Poisson process and the diffusion process moves far away from the absorbing set $\partial\mathcal{X}$. We compute the distribution of the coupling time. The coupling time distribution and its exponential tail are shown in Figure 5 Top Left, that gives the contraction rate $\gamma=0.0849$. Then we apply Algorithm 1 to compute the finite time error. The finite time error is 0.0030 for $V = 1000$. As a result, the upper bound given in \eqref{inequality} is 0.0375 for $V = 1000$. 

To compare the different situations for volume $V=1000$ and $V=10$,  we plot trajectories of each species for $V=1000$ in Figure 6, and the case for $V=10$ is shown in Figure 7. It is not hard to see the trajectory-by-trajectory behavior between Poisson process and diffusion process is quite remarkable when $V=1000$. However, more regeneration happens in Poisson process when $V=10$. So it's not surprised us that the finite time error for $V=10$ is 0.1286, that around 40 times larger than the case for $V=1000$. Trajectories of the Poisson process have high probability moving along the boundary in this case. Same as above, we compute the contraction rate $\gamma$ of the coupling time distribution to be $1.7905$. As a result, the upper bound given in \eqref{inequality} is 0.1543 for $V=10$. 

\begin{figure}[h]
 \centering
     \includegraphics[width=\textwidth]{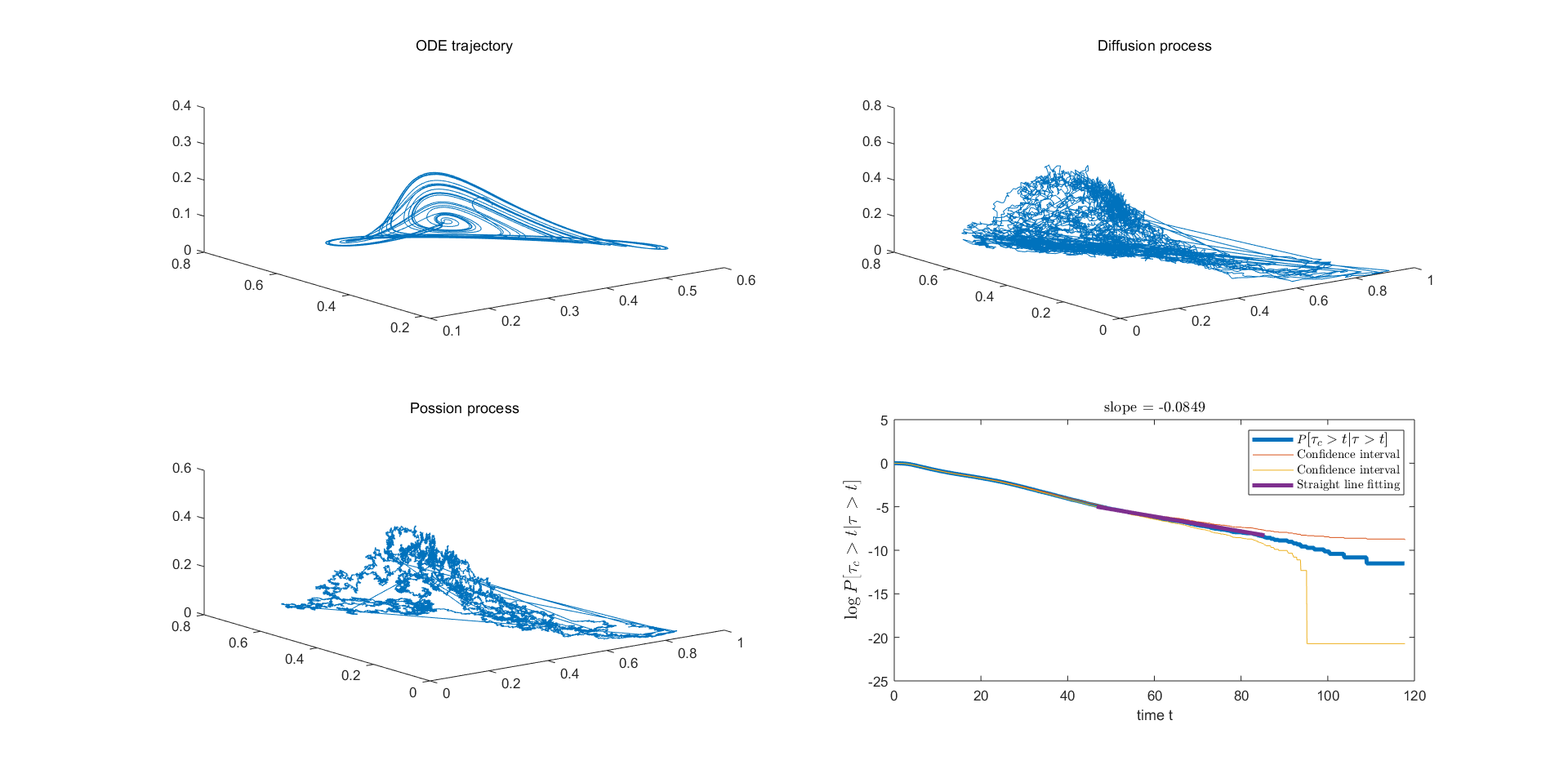}
     \caption{(Case $V=1000$) $\textbf{Upper panel}$: ($\textbf{Left}$) ODE trajectories. ($\textbf{Right}$) Poisson process.\ $\textbf{Lower panel}$: ($\textbf{Left}$) Diffusion process. ($\textbf{Right}$) $\mathbb{P}(\tau_c>t|\tau<t)$ vs.$t$.}     
    \label{F1}
 \end{figure}
 
\begin{figure}[h]
 \centering
     \includegraphics[width=\textwidth]{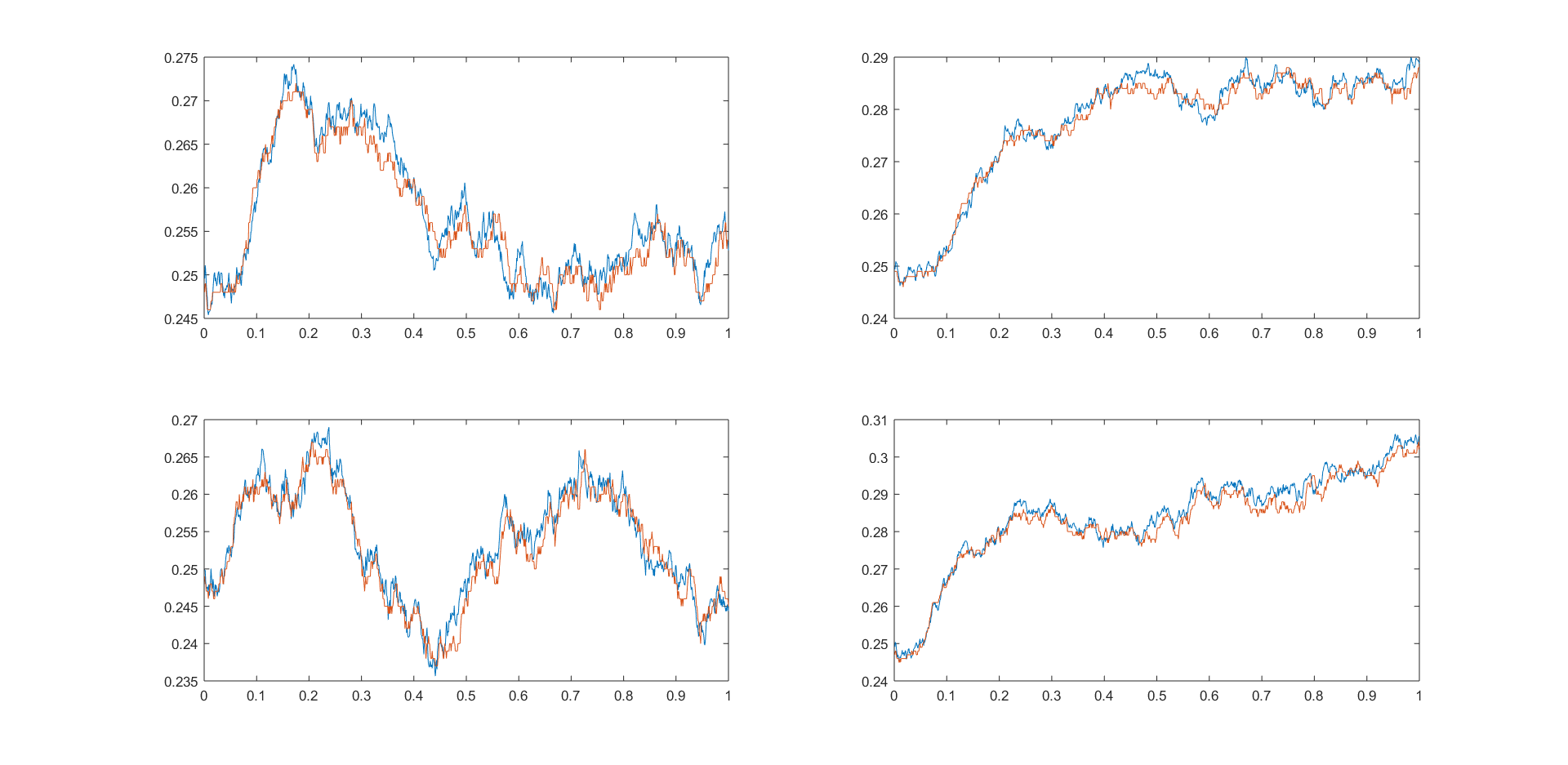}
     \caption{(Case $V=1000$) Poisson trajectories and diffusion trajectories for 4 species .}     
    \label{F2}
 \end{figure} 
 
\begin{figure}[h]
 \centering
     \includegraphics[width=\textwidth]{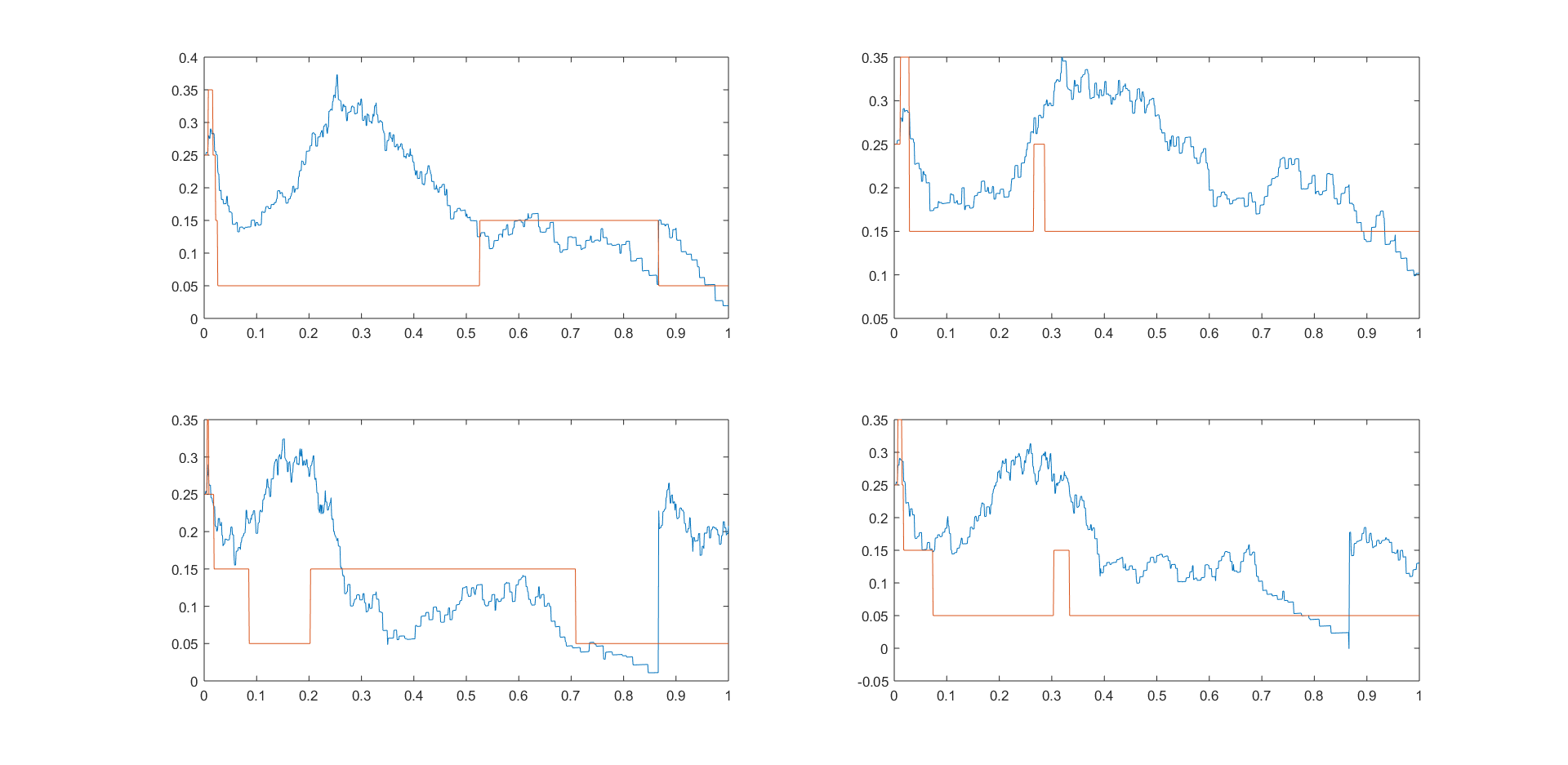}
     \caption{(Case $V=10$) Poisson trajectories and diffusion trajectories for 4 species.}     
    \label{F2}
 \end{figure} 

As we consider the effect of the capacity volume, the finite time error and the contraction rate for different volumes are compared in Table \ref{t1}. The last column $d_w(\hat{\pi}_X, \hat{\pi}_Y)$ is computed via \eqref{inequality}. It is not hard to see that upper bound of $d_w(\hat{\pi}_X, \hat{\pi}_Y)$ is quite larger when $V=10$. This is consistent with Theorem \ref{distance}, the supreme distance between two processes will be smaller as $V$ is getting larger. 

\begin{center}
\begin{table}
\begin{tabular}{| c |c| c| c| }
\hline
 volume $V$ & finite time error & contraction rate $\gamma$ & $d_w(\hat{\pi}_X, \hat{\pi}_Y)$ \\
 \hline
 1000 & 0.0030 & 0.0849 & 0.0375 \\  
 400 & 0.0110 & 0.1831 & 0.0659\\
 100 & 0.0502 & 0.3110  & 0.1878\\
 10 & 0.1286 & 1.7905 & 0.1543\\
 \hline
\end{tabular}
\caption{4D Lotka-Volterra model: Numerical results for different volumes}
\label{t1}
\end{table}
\end{center}

\section{Conclusion}
In this paper we develop a coupling-based approach to quantitatively estimate the distance between the QSD of a stochastic mass-action process and that of its diffusion approximation. The dependence of QSDs in terms of the volume of the mass-action system is studied. To address the challenge of QSDs, we use the idea of regeneration from QSDs after exiting to construct a process with stationary distribution. This is the the main change from our previous work  \cite{li2020numerical,dobson2021using}. Both the coupling algorithm and the path-wise matching of a stochastic mass-action system and its diffusion approximation need to be adapted to the regeneration from QSDs.  We compare the finite time error and the rate of contraction for different population size $V$. All numerical results shows that the distance between two QSDs is smaller for larger population. In general, the effect of demographic noise must be seriously addressed when the population is small.

The study of path-wise approximation of stochastic mass-action systems by diffusion processes and the coupling of diffusion processes motivates a very interesting question. All our existing work relies on the reflection coupling of diffusion processes, which is known to be highly effective. Then how can one effectively couple two continuous-time Markov processes on a lattice? A successful coupling of two trajectories of a mass-action system will extend our framework of sensitivity analysis to many more applications. We believe it is very difficult to couple the exact stochastic mass-action system because random events occur at continuous time. However, there may be  some way of building a "discrete reflection" and coupling two tau-leaping trajectories, i.e., two trajectories of equation \eqref{possN} effectively. This will be addressed in our future work. 

\appendix

\section{Expressions of mass-action systems and their diffusion approximations}
To improve the readability, we put the explicit formulas of the Poisson approximation and the diffusion approximation for each model in this section. 
\subsection{SIR model}
There are four reactions are involved in the SIR system, so we have 4 pairs of Poisson process $P_i$ and Wiener process $B_i$ appear in the evolution of each class. The rule of update of the numerical approximation $\hat{X}_n$ follows
\begin{tiny}
\begin{equation*}
\begin{aligned}
    \hat{X}_{n+1} &=\begin{pmatrix}
    S_{n+1}\\
    I_{n+1}
    \end{pmatrix} \\
    & = \begin{pmatrix}
    S_n\\
    I_n
    \end{pmatrix} + \frac{1}{V}\begin{pmatrix}
    \left[P_1(q_{1,n+1}) - P_1(q_{1,n})\right] - \left[P_2(q_{2,n+1}) - P_2(q_{2,n}\right] - \left[P_3(q_{3,n+1} - P_3(q_{3,n})\right]\\
    \left[P_2(q_{2,n+1}) - P_2(q_{2,n}) \right] - \left[P_4(q_{4,n+1}) - P_4(q_{4,n})\right]
    \end{pmatrix},\\
    &:=\begin{pmatrix}
    S_n\\
    I_n
    \end{pmatrix} + \frac{1}{V}\begin{pmatrix}
   {\bm f}_1(P_1, \cdots, P_4, q_{1,n}, \cdots , q_{4,n})\\
    {\bm f}_2(P_1, \cdots, P_4, q_{1,n}, \cdots, q_{4,n}) 
    \end{pmatrix}
\end{aligned}
\end{equation*}
\end{tiny}
where $P_i, i=\{1,2,3,4\}$ are independent unit rate Poisson processes. 

The rule of update of the numerical approximation $\hat{Y}_n$ follows
\begin{tiny}
\begin{equation*}
\begin{aligned}
    \hat{Y}_{n+1} &=\begin{pmatrix}
    S_{n+1}\\
    I_{n+1}
    \end{pmatrix} = \begin{pmatrix}
    S_n\\
    I_n
    \end{pmatrix} + \frac{1}{V}\begin{pmatrix}
    [q_{1,n+1}-q_{1,n}] - [q_{2,n+1}-q_{2,n}] -[q_{3,n+1}-q_{3,n}]\\
    [q_{2,n+1}-q_{2,n}] - [q_{4,n+1}-q_{4,n}]
    \end{pmatrix} \\ 
    & + \frac{1}{V}\begin{pmatrix}
    \left[B_1(q_{1,n+1}) - B_1(q_{1,n})\right] - \left[B_2(q_{2,n+1}) - B_2(q_{2,n})\right] - \left[B_3(q_{3,n+1} - B_3(q_{3,n}\right]\\
    \left[B_2(q_{2,n+1}) - B_2(q_{2,n}) \right] - \left[B_4(q_{4,n+1}) - B_4(q_{4,n})\right]
    \end{pmatrix},\\
    & := \begin{pmatrix}
    S_n\\
    I_n
    \end{pmatrix} + \frac{1}{V}\begin{pmatrix}
    {\bm g}_1 (q_{1,n}, \cdots , q_{4,n})\\
    {\bm g}_2 (q_{1,n}, \cdots , q_{4,n})
    \end{pmatrix} 
    + \frac{1}{V}\begin{pmatrix}
    {\bm \sigma}_1(  B_1, \cdots, B_4, q_{1,n}, \cdots , q_{4,n})\\
    {\bm \sigma}_2(B_1, \cdots, B_4, q_{1,n}, \cdots , q_{4,n})
    \end{pmatrix},
\end{aligned}
\end{equation*}
\end{tiny}
where $B_i,\ i=\{1, 2, 3, 4\}$ are independent standard Wiener processs.

As two classes $S_n$ and $I_n$ and four reactions are considered in this SIR model, the corresponding diffusion matrix $M$ should be a $2\times4$ matrix. Specifically, the diffusion matrix $M$ reads as
\begin{tiny}
\begin{equation*}
\begin{aligned}
    \hat{Y}_{n+1} &=\begin{pmatrix}
    S_{n+1}\\
    I_{n+1}
    \end{pmatrix} = \begin{pmatrix}
    S_n\\
    I_n
    \end{pmatrix} + \frac{1}{V}\begin{pmatrix}
    [q_{1,n+1}-q_{1,n}] - [q_{2,n+1}-q_{2,n}] -[q_{3,n+1}-q_{3,n}]\\
    [q_{2,n+1}-q_{2,n}] - [q_{4,n+1}-q_{4,n}]
    \end{pmatrix} \\ 
    & + \frac{1}{V}\begin{pmatrix}
    \sqrt{q_{1,n+1}-q_{1,n}} & -\sqrt{q_{2,n+1}-q_{2,n}} &
    -\sqrt{q_{3,n+1}-q_{3,n}} & 0 \\
    0 & \sqrt{q_{2,n+1}-q_{2,n}} & 0 &
    -\sqrt{q_{4,n+1}-q_{4,n}}
    \end{pmatrix} 
    \begin{pmatrix}
     W_1 \\ W_2 \\ W_3 \\ W_4
    \end{pmatrix}\\
    & = \begin{pmatrix}
    S_n\\
    I_n
    \end{pmatrix} + \frac{1}{V}\begin{pmatrix}
    V h\alpha- V h \beta S_n I_n  -V h \mu S_n\\
    V h \beta S_n I_n - V h (\mu+\rho+\gamma) I_n
    \end{pmatrix}  \\ 
    & + \frac{1}{V}\begin{pmatrix}
     \sqrt{V h\alpha} & -\sqrt{V h \beta S_n I_n } & -\sqrt{V h \mu S_n} & 0\\
     0 & \sqrt{V h \beta S_n I_n} & 0 & -\sqrt{V h (\mu+\rho+\gamma) I_n}
    \end{pmatrix}\begin{pmatrix}
     W_1 \\ W_2 \\ W_3 \\ W_4
    \end{pmatrix}\\
    & := \begin{pmatrix}
    S_n\\
    I_n
    \end{pmatrix} + \frac{1}{V}\begin{pmatrix}
    {\bm g}_1 (q_{1,n}, \cdots , q_{4,n})\\
    {\bm g}_2 (q_{1,n}, \cdots , q_{4,n})
    \end{pmatrix} 
    + \frac{1}{V} M \begin{pmatrix}
     W_1 \\ W_2 \\ W_3 \\ W_4
    \end{pmatrix}
\end{aligned}
\end{equation*}
\end{tiny}
where $W_i,\ i=\{1, 2, 3, 4\}$ are independent standard normal distributed random variables. 

\subsection{Oregnator model}
For the Oregnator model, there are six reactions involved. So we have 6 pairs of Poisson process $P_i$ and $B_i$ in the approximations. The rule of update of the numerical approximation $\hat{X}_n$ follows
\begin{tiny}
\begin{equation*}
\begin{aligned}
    &\hat{X}_{n+1} =\begin{pmatrix}
    S_{1,n+1}\\
    S_{2, n+1}\\
    S_{3, n+1}
    \end{pmatrix} =\begin{pmatrix}
    S_{1, n}\\
    S_{2, n}\\
    S_{3, n}
    \end{pmatrix}\\
    & + \frac{1}{V}\begin{pmatrix}
    [P_1(q_{1,n+1}) - P_1(q_{1,n})]- [P_2(q_{2,n+1}) - P_2(q_{2,n})]+[P_3(q_{3,n+1}) - P_3(q_{3,n})]-2[P_4(q_{4,n+1}) - P_4(q_{4,n})]\\
    - [P_1(q_{1, n+1}) - P_1(q_{1, n})] - [P_2(q_{2,n+1}) - P_2(q_{2,n})]+[P_5(q_{5, n+1}) - P_5(q_{5, n})]\\
    2[P_3(q_{3,n+1}) - P_3(q_{3,n}) ]-[P_5(q_{5,n+1}) - P_5(q_{5,n}) ]-[P_6(q_{6,n+1}) - P_6(q_{6,n})]
    \end{pmatrix}\\
    & := \begin{pmatrix}
    S_{1, n}\\
    S_{2, n}\\
    S_{3, n}
    \end{pmatrix}+ \frac{1}{V}\begin{pmatrix}
    \bm{f_1}(P_1,\cdots, P_6, q_{1, n},\cdots, q_{6,n})\\
    \bm{f_2}(P_1,\cdots, P_6, q_{1, n},\cdots, q_{6,n})\\
    \bm{f_3}(P_1,\cdots, P_6, q_{1, n},\cdots, q_{6,n})
    \end{pmatrix}
\end{aligned}
\end{equation*}
\end{tiny}
where $P_i, i=\{1,\cdots, 6\}$ are independent unite rate Poisson processes.

The diffusion approximation $\hat{Y}$ can be written as
\begin{tiny}
\begin{equation*}
\begin{aligned}
    &\hat{Y}_{n+1} = \begin{pmatrix}
    S_{1,n+1}\\
    S_{2,n+1}\\
    S_{3,n+1}
    \end{pmatrix} \\
    &=\begin{pmatrix}
    S_{1,n}\\
    S_{2,n}\\
    S_{3,n}
    \end{pmatrix} +
   \frac{1}{V}\begin{pmatrix}
   [q_{1,n+1} - q_{1,n}] - [q_{2,n+1} - q_{2,n}] + [q_{3,n+1}-q_{3,n}] - 2[q_{4,n+1}-q_{4,n}]\\
   -[q_{1,n+1} -q_{1,n}] -[q_{2,n+1} - q_{2,n}] + [q_{5,n+1}-q_{5,n}]\\
   2[q_{3,n+1}-q_{3,n}] - [q_{5,n+1}-q_{5,n}] - [q_{6,n+1}-q_{6,n}]
   \end{pmatrix}\\
    &+ \frac{1}{V}\begin{pmatrix}
    [B_1(q_{1,n+1}) - B_1(q_{1,n})]- [B_2(q_{2,n+1}) - B_2(q_{2,n})]+[B_3(q_{3,n+1}) - B_3(q_{3,n})]-2[B_4(q_{4,n+1}) - B_4(q_{4,n})]\\
    - [B_1(q_{1, n+1}) - B_1(q_{1, n})] - [B_2(q_{2,n+1}) - B_2(q_{2,n})]+[B_5(q_{5, n+1}) - B_5(q_{5, n})]\\
    2[B_3(q_{3,n+1}) - B_3(q_{3,n}) ]-[B_5(q_{5,n+1}) - B_5(q_{5,n}) ]-[B_6(q_{6,n+1}) - B_6(q_{6,n})]
    \end{pmatrix}\\
    & :=\begin{pmatrix}
    S_{1,n}\\
    S_{2,n}\\
    S_{3,n}
    \end{pmatrix} + \frac{1}{V}\begin{pmatrix}
     \bm{g_1}(q_{1,n}, \cdots, q_{6,n})\\
     \bm{g_2}(q_{1, n}, \cdots, q_{6, n})\\
     \bm{g_3}(q_{1, n}, \cdots, q_{6, n})
    \end{pmatrix} +
   \frac{1}{V}\begin{pmatrix} \bm{\sigma_1}(B_1,\cdots, B_6, q_{1, n}, \cdots, q_{6, n})\\
    \bm{\sigma_2}(B_1,\cdots, B_6, q_{1, n}, \cdots, q_{6, n})\\
     \bm{\sigma_3}(B_1,\cdots, B_6, q_{1, n}, \cdots, q_{6, n})
   \end{pmatrix}, 
\end{aligned}
\end{equation*}
\end{tiny}
where $B_i, \{i=1, \cdots, 6\}$ are independent Wiener processes.

As we focus on three classes $S_{1,n}, S_{2,n}, S_{3,n}$ and six reactions, we can confirm that the diffusion matrix $M$ is a $3\times6$ matrix. Specifically, the diffusion matrix $M$ is defined as follows.
\begin{tiny}   
\begin{equation*}  
\begin{aligned}
   &\hat{Y}_{n+1} = \begin{pmatrix}
    S_{1,n+1}\\
    S_{2,n+1}\\
    S_{3,n+1}
    \end{pmatrix} \\
    &=\begin{pmatrix}
    S_{1,n}\\
    S_{2,n}\\
    S_{3,n}
    \end{pmatrix} +
   \frac{1}{V}\begin{pmatrix}
   [q_{1,n+1} - q_{1,n}] - [q_{2,n+1} - q_{2,n}] + [q_{3,n+1}-q_{3,n}] - 2[q_{4,n+1}-q_{4,n}]\\
   -[q_{1,n+1} -q_{1,n}] -[q_{2,n+1} - q_{2,n}] + [q_{5,n+1}-q_{5,n}]\\
   2[q_{3,n+1}-q_{3,n}] - [q_{5,n+1}-q_{5,n}] - [q_{6,n+1}-q_{6,n}]
   \end{pmatrix}\\
   & + \frac{1}{V}\begin{pmatrix}
    \sqrt{q_{1,n+1} - q_{1,n}} & -\sqrt{q_{2,n+1} - q_{2,n}} & \sqrt{q_{3,n+1}-q_{3,n}} & -\sqrt{2[q_{4,n+1}-q_{4,n}]} & 0 &  0\\
    -\sqrt{q_{1,n+1} -q_{1,n}} & -\sqrt{q_{2,n+1} - q_{2,n}} & 0 & 0 & \sqrt{q_{5,n+1}-q_{5,n}} & 0\\
   0 & 0 & \sqrt{2[q_{3,n+1}-q_{3,n}]} & 0 & -\sqrt{q_{5,n+1}-q_{5,n}} & -\sqrt{q_{6,n+1}-q_{6,n}}
   \end{pmatrix}\begin{pmatrix}
        W_1\\
        W_2\\
        W_3\\
        W_4\\
        W_5\\
        W_6
    \end{pmatrix}\\
   &=\begin{pmatrix}
    S_{1,n}\\
    S_{2,n}\\
    S_{3,n}
    \end{pmatrix} + \frac{1}{V}\begin{pmatrix}
   V h C_1 S_{2,n} - V h C_2 S_{1,n}S_{2,n} + V h C_3 S_{1,n} - 2V h C_4S_{1,n}^2\\
    V h C_1 S_{2,n} - V h C_2 S_{1,n}S_{2,n}  +V h C_5 \delta S_{3,n}\\
   2V h C_3 S_{1,n} - V h C_5 \delta S_{3,n} - V h C_5 (1-\delta) S_{3,n}
   \end{pmatrix}\\
   &+ \frac{1}{V}\begin{pmatrix}
        \sqrt{V h C_1 S_{2,n}} & -\sqrt{V h C_2 S_{1,n}S_{2,n}} & \sqrt{V h C_3 S_{1,n}} & 2\sqrt{V h C_4S_{1,n}^2} & 0 & 0\\
        -\sqrt{V h C_1 S_{2,n}} & -\sqrt{V h C_2 S_{1,n}S_{2,n}} & 0 & 0 & \sqrt{V h C_5 \delta S_{3,n}} & 0\\
        0 & 0 & 2\sqrt{V h C_3 S_{1,n}} & 0 & -\sqrt{V h C_5 \delta S_{3,n}} & -\sqrt{V h C_5 (1-\delta) S_{3,n}}
    \end{pmatrix}\begin{pmatrix}
        W_1\\
        W_2\\
        W_3\\
        W_4\\
        W_5\\
        W_6
    \end{pmatrix}\\
    & := \begin{pmatrix}
    S_{1,n}\\
    S_{2,n}\\
    S_{3,n}
    \end{pmatrix} + \frac{1}{V}\begin{pmatrix}
     \bm{g_1}(q_{1,n}, \cdots, q_{6,n})\\
     \bm{g_2}(q_{1, n}, \cdots, q_{6, n})\\
     \bm{g_3}(q_{1, n}, \cdots, q_{6, n})
    \end{pmatrix} +
   \frac{1}{V} M \begin{pmatrix}
        W_1\\
        W_2\\
        W_3\\
        W_4\\
        W_5\\
        W_6
    \end{pmatrix},
\end{aligned}
\end{equation*}
\end{tiny}
where $W_i, \{i=1, \cdots, 6\}$ are independent standard normal distributed random variables.
\subsection{4D Lotka-Volterra model}
For the 4D Lotka-Volterra system, there are 17 reactions involved, so we have 17 pairs of Poisson process $P_i$ and Wiener process $B_i$. The rule of update of the numerical approximation $\hat{X}_n$ follows
\begin{tiny}
\begin{equation*}
\begin{aligned}
    &\hat{X}_{n+1} =\begin{pmatrix}
    S_{1, n+1}\\
    S_{2, n+1}\\
    S_{3, n+1}\\
    S_{4, n+1}
    \end{pmatrix} =\begin{pmatrix}
    S_{1,n}\\
    S_{2,n}\\
    S_{3,n}\\
    S_{4,n}
    \end{pmatrix}\\
    &+\frac{1}{V}\begin{psmallmatrix}
     [P_1(q_{1,n+1})-P_1(q_{1,n})]-[P_2(q_{2,n+1})-P_2(q_{2,n})] -[P_3(q_{1,n+1}) - P_3(q_{1,n})] -[P_4(q_{4,n+1})-P_4(q_{4,n})] \\
     [P_5(q_{5,n+1})-P_5(q_{5,n})]-[P_6(q_{6,n+1})-P_6(q_{6,n})] -[P_7(q_{7,n+1}) - P_7(q_{7,n})] -[P_8(q_{8,n+1})-P_8(q_{8,n})] \\
     [P_9(q_{9,n+1})-P_9(q_{9,n})]-[P_{10}(q_{10,n+1})-P_{10}(q_{10,n})] -[P_{11}(q_{11,n+1}) - P_{11}(q_{11,n})] -[P_{12}(q_{12,n+1})-P_{12}(q_{12,n})]\\
     [P_{13}(q_{13,n+1})-P_{13}(q_{13,n})]-[P_{14}(q_{14,n+1})-P_{14}(q_{14,n})] -[P_{15}(q_{15,n+1}) - P_{15}(q_{15,n})] -[P_{16}(q_{16,n+1})-P_{16}(q_{16,n})] -[P_{17}(q_{17,n+1})-P_{17}(q_{17,n})]
    \end{psmallmatrix}\\
    & := \begin{pmatrix}
    S_{1,n}\\
    S_{2,n}\\
    S_{3,n}\\
    S_{4,n}
    \end{pmatrix}
    +\frac{1}{V} \begin{pmatrix}
     \bm{f_1}(P_1, \cdots, P_{17}, q^n_{i, 1}, \cdots, q^{n}_{i, 5})\\
     \bm{f_2}(P_1, \cdots, P_{17}, q^n_{i, 1}, \cdots, q^{n}_{i, 5} )\\
      \bm{f_3}(P_1, \cdots, P_{17}, q^n_{i, 1}, \cdots, q^{n}_{i, 5})\\
      \bm{f_4}(P_1, \cdots, P_{17}, q^n_{i, 1}, \cdots, q^{n}_{i, 5})
    \end{pmatrix}, 
\end{aligned}
\end{equation*}
\end{tiny}
where $i=1, \cdots, 4$ and $P_j, \{j=1, \cdots, 17\}$ are independent unit rate Poisson processes. 
The diffusion approximation $\hat{Y}$ can be written as
\begin{tiny}
\begin{equation*}
\begin{aligned}
    &\hat{Y}_{n+1} = \begin{pmatrix}
    S_{1, n+1}\\
    S_{2, n+1}\\
    S_{3, n+1}\\
    S_{4, n+1}
    \end{pmatrix} =\begin{pmatrix}
    S_{1,n}\\
    S_{2,n}\\
    S_{3,n}\\
    S_{4,n}
    \end{pmatrix} + \frac{1}{V}\begin{psmallmatrix}
     [q_{1,n+1}-q_{1,n}] - [q_{2,n+1}-q_{2,n}] - [q_{3,n+1}-q_{3,n}] -[q_{4,n+1}-q_{4,n}]\\
     [q_{5,n+1}-q_{5,n}] -[q_{6,n+1}-q_{6,n}] - [q_{7,n+1}-q_{7,n}] -[q_{8,n+1}-q_{8,n}]\\
     [q_{9,n+1}-q_{9,n}] -[q_{10,n+1}-q_{10,n}] -[q_{11,n+1}-q_{11,n}] -[q_{12,n+1}-q_{12,n}]\\
     [q_{13,n+1}-q_{13,n}] - [q_{14,n+1} -q_{14,n}] - [q_{15,n+1}-q_{15,n}] -[q_{16,n+1}-q_{16,n}] - [q_{17,n+1}-q_{17,n}]
    \end{psmallmatrix}\\
    & + \frac{1}{V}\begin{psmallmatrix}
     [B_1(q_{1,n+1})-B_1(q_{1,n})]-[B_2(q_{2,n+1})-B_2(q_{2,n})] -[B_3(q_{3,n+1}) - B_3(q_{3,n})] -[B_4(q_{4,n+1})-B_4(q_{4,n})] \\
     [B_5(q_{5,n+1})-B_5(q_{5,n})]-[B_6(q_{6,n+1})-B_6(q_{6,n})] -[B_7(q_{7,n+1}) - B_7(q_{7,n})] -[B_8(q_{8,n+1})-B_8(q_{8,n})] \\
     [B_9(q_{9,n+1})-B_9(q_{9,n})]-[B_{10}(q_{10,n+1})-B_{10}(q_{10,n})] -[B_{11}(q_{11,n+1}) - B_{11}(q_{11,n})] -[B_{12}(q_{12,n+1})-B_{12}(q_{12,n})]\\
     [B_{13}(q_{13,n+1})-B_{13}(q_{13,n})]-[B_{14}(q_{14,n+1})-B_{14}(q_{14,n})] -[B_{15}(q_{15,n+1}) - B_{15}(q_{15,n})] -[B_{16}(q_{16,n+1})-B_{16}(q_{16,n})] -[B_{17}(q_{17,n+1})-B_{17}(q_{17,n})]
    \end{psmallmatrix}\\
    & :=\begin{pmatrix}
    S_{1,n}\\
    S_{2,n}\\
    S_{3,n}\\
    S_{4,n}
    \end{pmatrix}
    +\frac{1}{V} \begin{pmatrix}
     \bm{g_1}(q^n_{i, 1}, \cdots, q^{n}_{i, 5})\\
     \bm{g_2}(q^n_{i, 1}, \cdots, q^{n}_{i, 5} )\\
      \bm{g_3}(q^n_{i, 1}, \cdots, q^{n}_{i, 5})\\
      \bm{g_4}(q^n_{i, 1}, \cdots, q^{n}_{i, 5})
    \end{pmatrix}
    + \frac{1}{V} \begin{pmatrix}
     \bm{\sigma_1}(B_1, \cdots, B_{17}, q^n_{i, 1}, \cdots, q^{n}_{i, 5})\\
     \bm{\sigma_2}(B_1, \cdots, B_{17}, q^n_{i, 1}, \cdots, q^{n}_{i, 5} )\\
      \bm{\sigma_3}(B_1, \cdots, B_{17}, q^n_{i, 1}, \cdots, q^{n}_{i, 5})\\
      \bm{\sigma_4}(B_1, \cdots, B_{17}, q^n_{i, 1}, \cdots, q^{n}_{i, 5})
    \end{pmatrix},
\end{aligned}
\end{equation*}
\end{tiny}
where $i=1, \cdots, 4$, $B_j, \{j=1, \cdots, 17\}$ are independent Wiener processes.

\newpage
\bibliographystyle{plain}
\bibliography{ref.bib}

\end{document}